\documentclass[11pt]{amsart}
\usepackage{amssymb}
\usepackage{amssymb, amsmath}
\usepackage{verbatim}
\usepackage{color}
\let\d=\partial

\let\wt=\widetilde
\let\wh=\widehat

\def\cC{{\mathcal C}}
\def\cD{{\mathcal D}}

\def\cF{{\mathcal F}}

\def\cI{{\mathcal I}}

\def\cL{{\mathcal L}}

\def\cQ{{\mathcal Q}}

\def\cX{{\mathcal X}}

\def\N{{\mathbb N}}
\def\R{{\mathbb R}}

\def\Z{{\mathbb Z}}

\def\virgp{\raise 2pt\hbox{,}}
\def\cdotpv{\raise 2pt\hbox{;}}

\def\div{ \hbox{\rm div}\,  }

\def\ddj{\dot \Delta_j}

\def\da{\delta\!a}
\def\dv{\delta\!v}

\def\dh{\delta\!h}
\def\dK{\delta\!K}
\def\dL{\delta\!L}

\newtheorem{thm}{Theorem}[section]

\newtheorem{rmk}{Remark}[section]

\newtheorem{prop}{Proposition}[section]

\newcommand{\ben}{\begin{eqnarray}}
\newcommand{\een}{\end{eqnarray}}
\newcommand{\beno}{\begin{eqnarray*}}
\newcommand{\eeno}{\end{eqnarray*}}

\newcommand{\with}{\quad\!\hbox{with}\!\quad}
\newcommand{\andf}{\quad\!\hbox{and}\!\quad}

\title[One-dimensional Navier-Stokes equations]{The  one-dimensional compressible
Navier-Stokes equations in critical regularity spaces}
\author{Rapha\"el Danchin\\ Universit\'e Paris-Est Cr\'eteil}

\address{Universit\'{e} Paris-Est,  LAMA (UMR 8050), UPEMLV, UPEC, CNRS, Institut Universitaire de France,
 61 avenue du G\'{e}n\'{e}ral de Gaulle, 94010 Cr\'{e}teil Cedex, France.}
 \email{raphael.danchin@u-pec.fr}

\keywords{Compressible Navier-Stokes, one-dimensional, critical regularity, large time behavior, high viscosity limit}
\subjclass[2020]{76N10, 35Q30}

\begin{document}

\begin{abstract}  We are concerned with the barotropic compressible Navier-Stokes equations on the real line.
Our primary goal is to establish the global well-posedness in a critical regularity framework 
in the case where the initial data are small perturbations of a stable constant state. 
Surprisingly, even though the result in the multi-dimensional case is by now classical,
 the one-dimensional case has not been elucidated yet as far as we know. 
This is due to the fact that in the critical framework, the regularity of the velocity is so negative that
 some nonlinear terms are  out of control.  
Here, we overcome the difficulty by considering the equations in the mass Lagrangian coordinates system. 

Granted with a global well-posedness statement, we then establish optimal time decay estimates
and  investigate the high viscosity limit, pointing out the convergence of the specific volume to the solution
of some ordinary differential equation, after  time and space rescaling.
\end{abstract}
\maketitle

We consider  the barotropic compressible Navier-Stokes equations on the real line: 
\begin{equation}\label{eq:NSC}\left\{
 \begin{array}{l}
  \rho_t +(\rho u)_x =0,\\[5pt]
(\rho u)_t + (\rho u^2)_x -( \mu u_x)_x + P_x=0,
 \end{array}\right.
\end{equation}
where $u=u(t,x)\in\R$ and $\rho=\rho(t,x)\in\R_+$ (with $t\in\R_+$ and $x\in\R$) stand for the velocity
and the density of the fluid, respectively. 
We  assume that   the viscosity coefficient $\mu$ is a smooth positive function of $\rho,$ and that  the pressure 
$P$ is a smooth function of $\rho.$ 
Equations \eqref{eq:NSC} are supplemented with initial data $(\rho_0,u_0)$ at time $t=0.$
\medbreak

The study of the initial value problem associated to \eqref{eq:NSC} has a long history. 
In a pioneering work \cite{Kanel}  Kanel'   proved the global existence of classical finite energy solutions 
 in the case where the initial density is bounded and bounded away from zero and goes to some positive constant at infinity.
 There, the equations are written in the mass Lagrangian coordinates system that will be presented  below.

 In a subsequent work \cite{KS}, Kazhikhov and Shelukhin obtained the  short-time existence  and uniqueness of $H^1$ 
 (or smoother) solutions for the full one-dimensional compressible Navier-Stokes equations governing the evolution of perfect gases, 
 when both the density and temperature
are bounded and bounded away from zero.
Those solutions have been shown to be global (for appropriate choice of pressure functions)
a bit later by Kawashima and Nishida in \cite{KN}. 

Another approach consisting in looking for less regular solutions, possibly discontinuous, has been initiated by Serre in \cite{Serre} and 
Hoff in \cite{Hoff87}. The Lagrangian coordinates version of \eqref{eq:NSC} 
is considered and the initial velocity is only in  $L^2$ while the initial density, still bounded away from zero,
is  in the space $BV$ of functions with bounded variations (or in the Sobolev space $H^s$ with $s>0$ in Serre's work). The solutions therein are  local in time. 
Global existence has been proved later by Hoff in \cite{Hoff98} for  strictly increasing pressure laws. In his work, 
the density still has to be bounded and bounded away from zero, but  the $BV$ assumption is no longer needed. 
In fact, it is only required that $(\rho_0-\underline\rho,u_0-\underline u)$ is in $L^2$ where 
$(\underline\rho,\underline u)$ stands for any pair of smooth monotonous functions connecting different constant states at $-\infty$ and $+\infty.$ 
In these  works, the quantity $\mu u_x -P$ which is sometimes referred to as
the `viscous effective flux'  plays a fundamental role. 
Very recently, Chen, Ha, Hu and Nguyen established in  \cite{CHHN}  a global existence and uniqueness result
in the case of small data for any density with  $BV$ regularity and bounded away from zero, and 
velocity with some positive Sobolev regularity. 

The common point between the aforementioned works is that vacuum is not allowed and  that
 the viscosity coefficient is constant (although this latter property is 
not so fundamental).
A third approach based on the relative entropy method 
has been 
used by  Mellet and Vasseur in \cite{MV}.  There,   $(\rho_0-\underline\rho,u_0-\underline u)$  are in $H^1,$ and
the viscosity coefficient is allowed to depend on the density and even to degenerate near vacuum. 
Uniqueness is shown if the viscosity is bounded away from zero. One can finally mention 
the  recent paper by Tan, Wang and Zhang \cite{TWZ} where vacuum is permitted
and exponential decay results are proved when the equations are posed on a segment with appropriate boundary conditions. 
\medbreak
Our goal here is to prove that  the compressible Navier-Stokes equations are 
well-posed in the so-called critical regularity framework, like in the multi-dimensional case. 
This critical regularity approach originates from the seminal work by Fujita and Kato  \cite{FK} on the incompressible Navier-Stokes equations.  In our context, it
is based on the observation  that if one neglects the (lower order) pressure term, then the Cauchy problem for  \eqref{eq:NSC} is invariant for all positive 
real number $\lambda$ by the rescaling:
$$(\rho,u)(t,x)\leadsto (\rho, \lambda u)(\lambda^2t,\lambda x)\andf 
(\rho_0,u_0)(t,x)\leadsto (\rho_0, \lambda u_0)(\lambda x).$$
It is thus expected that ``optimal'' functional settings for proving well-posedness by means of the contractive mapping theorem 
have the above invariance.
\smallbreak
Let us  focus on solutions such that
\begin{equation*}
(\rho,u)(t,x)\to (\bar\rho, 0)\quad\hbox{as }\ |x|\to\infty,\quad \hbox{for some }\ \bar\rho\in\R_+.
\end{equation*}
In the homogeneous Besov spaces setting,  critical regularity corresponds to choosing initial data $(\rho_0,u_0)$ such that
\begin{equation}\label{eq:critical}(\rho_0-\bar\rho) \in \dot B^{d/p}_{p,1}\andf u_0\in \dot B^{-1+d/p}_{p,1},\end{equation}
and, indeed, it has been pointed out in a number of works (see e.g. 
 \cite{CD,CMZ,D1,Haspot11})
 that the compressible Navier-Stokes equations in dimension $d\geq2$ 
 are locally well-posed if \eqref{eq:critical} is satisfied (with  $p$ not too large and $\rho_0$ bounded away from zero), and  globally well-posed if 
 $(\rho_0,u_0)$ is close enough to $(\bar\rho,0)$ and $P'(\bar\rho)>0$.  
 For proving such results,  one usually rewrites  the compressible Navier-Stokes equations
under  nonconservative form after dividing the velocity equation by the density. In 
the one-dimensional setting, this amounts  to considering 
\begin{equation*}
\left\{ \begin{array}{l}
  \rho_t + \rho u_x+ \rho u_x =0,\\[5pt]
 u_t +  uu_x - \rho^{-1}( \mu u_x)_x + \rho^{-1}P_x=0.
 \end{array}\right.
\end{equation*}
However,  if $d=1$ then \eqref{eq:critical} makes 
 the regularity
of the  velocity so negative that  one cannot control  nonlinear terms like $\rho^{-1}( \mu u_x)_x$
by means of standard product laws. Typically,  the  product  
is not defined  on $\dot B^{1/p}_{p,1}\times \dot B^{1/p-1}_{p,1}$ if $p>2$, and has range in 
too big a space to be  compensated by  the parabolic smoothing of the velocity equation, if $p\leq2.$

This difficulty disappears if we manage to `put one derivative outside the nonlinear terms'.
To do so, rather than  considering \eqref{eq:NSC} under it original form, we rewrite it 
 in the \emph{mass Lagrangian coordinates system}, namely, following   the presentation of \cite{AKM}, 
we make the change of variables: 
\begin{equation}\label{eq:lag}
\Phi: (t,x)\mapsto (t,y)\with {\rm dy}=\rho\,{\rm  dx}-\rho u\,{\rm dt},\end{equation}
then  look at the governing equations for:
\begin{equation}
v(t,y):= u(t,x)\andf \eta(t,y):=\rho^{-1}(t,x)\with (t,y):=\Phi(t,x).\end{equation}
 This change of variable is justified if, say, the functions $\rho$ and $u$ are Lipschitz with, additionally, $\rho>0.$
  The  Jacobian matrix is
 $$
 D\Phi=\begin{pmatrix} 1&0\\
 -\rho u&\rho\end{pmatrix}$$
 and the inverse change of variable is 
 \begin{equation}\label{eq:lag-1}  \Phi^{-1}:(t,y)\mapsto (t,x)\with {\rm dx}= \eta\,{\rm dy}+v\,{\rm dt}.\end{equation}
 Since $\partial_x$ translates into $\eta^{-1}\partial_y$ and 
 $$(\eta^{-1}  v_t) (t,y)= \bigl((\rho u)_t+(\rho u^2)_x\bigr)(t,x),$$
  the equations corresponding to \eqref{eq:NSC} are 
  \begin{equation}\label{eq:NSClag}\left\{ \begin{array}{l}
  \eta_t - v_y =0,\\[5pt]
v_t + Q_y -( \nu v_y)_y=0
 \end{array}\right.\end{equation}
with $Q(\eta)=P(\eta^{-1})$ and $\nu(\eta)=\eta^{-1}\mu(\eta^{-1}).$
\medbreak
The present work is devoted to studying the Cauchy  problem and the asymptotic behavior of global solutions of \eqref{eq:NSClag}
 in the aforementioned  critical regularity setting, for initial data $(\eta_0,v_0)$ close to a steady state 
 $(\bar\eta,0)$ with $\bar\eta>0$ which is linearly stable in the sense that  
 \begin{equation}\label{eq:stab} 
Q'(\bar\eta)<0.
\end{equation}



\section{Results} \label{s:results}

Before stating our results,  introducing some notation and our functional framework is in order. 
We fix once and for all a homogeneous Littlewood-Paley decomposition $(\ddj)_{j\in\Z}$  (see its construction 
in e.g. \cite[Chap. 2]{BCD}) and define for all $p\in[1,\infty],$ $r\in[1,\infty]$ and $\sigma\in\R$ 
the following homogeneous Besov (semi)-norm:
$$\|z\|_{\dot B^\sigma_{p,r}}:= \Bigl\| 2^{j\sigma}\|\ddj z\|_{L^p(\R)}\Bigr\|_{\ell^r(\Z)}.$$ 
The homogeneous Besov space $\dot B^\sigma_{p,r}$ is the set  of tempered distributions $z$
such that 
$$\|z\|_{\dot B^\sigma_{p,r}}<\infty\andf \lim_{\lambda\to\infty} \|\chi(\lambda D) z\|_{L^\infty}=0,$$ 
 for some (or, equivalently, all) $\chi\in \cC^\infty_c(\R)$ with $\chi(0)\not=0.$
This definition guarantees that $(\dot B^\sigma_{p,r};\|\cdot\|_{\dot B^\sigma_{p,r}})$ is a normed space. 
In this paper, we will mostly have  $r=1,$ $p\in[1,\infty]$ and $\sigma\leq1/p,$
which ensures completeness of  $\dot B^\sigma_{p,1}.$ 
\smallbreak
An important ingredient of our analysis is to allow for different regularity of low and high frequencies. 
This motivates to introduce  for all $\alpha\in\R_+,$ $p\in[1,\infty]$ and $\sigma\in\R$ the notation
\begin{equation}\label{eq:not1}
\|z\|_{\dot B^\sigma_{p,1}}^{\ell,\alpha}:= \sum_{j\leq j_0+1+\log_2 \alpha} 2^{j\sigma}\|\ddj z\|_{L^p}\andf
\|z\|_{\dot B^\sigma_{p,1}}^{h,\alpha}:= \sum_{j\geq j_0+\log_2 \alpha} 2^{j\sigma}\|\ddj z\|_{L^p}.
\end{equation}
In the same spirit, we set
\begin{equation}\label{eq:not2}
z^{\ell,\alpha}:= \sum_{j\leq j_0+\log_2 \alpha} \ddj z\andf
z^{h,\alpha}:= \sum_{j>j_0+\log_2 \alpha} \ddj z.
\end{equation}
The value of the  cut-off parameter $j_0\in\Z$  will be universal: it will be determined during the proof of our global existence
theorem. 

The small overlap between low and high frequencies in \eqref{eq:not1} is intentional: it will enable 
us to use (repeatedly) the following
two inequalities\footnote{Throughout the paper,  the standard notation $A\lesssim B$ means that there exists
 a positive number $C$ whose exact value is irrelevant such that $A\leq CB.$}:
\begin{equation}\label{eq:lh}
\|z^{\ell,\alpha}\|_{\dot B^\sigma_{p,1}}\lesssim \|z\|_{\dot B^\sigma_{p,1}}^{\ell,\alpha}\andf
\|z^{h,\alpha}\|_{\dot B^\sigma_{p,1}}\lesssim  \|z\|_{\dot B^\sigma_{p,1}}^{h,\alpha}.
\end{equation}
As in many recent works dedicated to equations of parabolic type, 
in order to take advantage of  optimal maximal regularity results, it is convenient to use the following norms:
\begin{equation}\label{eq:tilde}
\|z\|_{\wt L^m_t(\dot B^\sigma_{p,1})}:= \sum_{j\in\Z} 2^{j\sigma}\|\ddj z\|_{L^m(0,t;L^p(\R))},\quad
1\leq m,p\leq\infty,\quad\!  0\leq t\leq\infty,\quad\! \sigma\in\R
\end{equation}
which, owing to Minkowski inequality, may be compared with the more classical 
norms in $L^m(0,t;\dot B^s_{p,1})$ as follows: 
\begin{equation}\label{eq:Mink}
\|z\|_{L^m_t(\dot B^\sigma_{p,1})}:= \|z\|_{L^m(0,t;\dot B^\sigma_{p,1})}\leq 
\|z\|_{\wt L^m_t(\dot B^\sigma_{p,1})}.\end{equation}
Let us first state a local well-posedness result for general data with critical regularity 
and nonvanishing specific volume.
\begin{thm}\label{thm:main0} Fix some reference specific volume $\bar\eta>0$ and
smooth pressure function $Q.$ 
Consider any data 
$(\eta_0,v_0)$  such that $a_0:=\eta_0-\bar\eta$  (resp. $v_0$) belongs to $\dot B^{1/p}_{p,1}$
(resp. $\dot B^{-1+1/p}_{p,1}$) for some $p\in[1,\infty).$ If, in addition,   $\inf_x \eta_0(x)>0$
then there exists $T>0$ such that Equations \eqref{eq:NSClag} have a unique solution $(\eta,v)$ with 
$$\inf_{(t,x)\in[0,T]\times\R} \eta(t,x)>0,\quad\!
(\eta-\bar\eta)\in \cC([0,T];\dot B^{1/p}_{p,1})\!\!\andf\!\!
v\in \cC([0,T];\dot B^{-1+1/p}_{p,1})\times L^1(0,T;\dot B^{1+1/p}_{p,1}).$$
\end{thm}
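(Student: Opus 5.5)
We write $a:=\eta-\bar\eta$ and recast \eqref{eq:NSClag} as a linear parabolic equation for $v$ coupled with a transport-free ODE-type equation for $a$:
$$a_t=v_y,\qquad v_t-\bigl(\nu(\eta)v_y\bigr)_y=-\bigl(Q(\eta)\bigr)_y .$$
All nonlinear terms are then in divergence form. This is the key point: the velocity equation only involves $(\nu v_y)_y$ and $(Q)_y$, and we need $\nu v_y\in L^1_T(\dot B^{1/p}_{p,1})$ and $Q(\eta)-Q(\bar\eta)\in L^1_T(\dot B^{1/p}_{p,1})$. Since $\dot B^{1/p}_{p,1}$ is an algebra embedded in $L^\infty$ when $d=1$ and $p<\infty$, both follow from the product and composition laws, provided $a\in L^\infty_T(\dot B^{1/p}_{p,1})$ and $\inf\eta>0$.

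The scheme is a standard iteration (or a fixed point) in the space
$$E_T=\bigl\{(a,v):\ a\in \cC([0,T];\dot B^{1/p}_{p,1}),\ v\in\cC([0,T];\dot B^{-1+1/p}_{p,1})\cap L^1(0,T;\dot B^{1+1/p}_{p,1})\bigr\}.$$
We proceed in four steps.

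\emph{Step 1 (linear estimates).} For the parabolic equation with variable coefficient,
$$w_t-(b\,w_y)_y=f_y,\qquad b=\nu(\eta),\qquad \inf b>0,\qquad b-\bar\nu\in \cC([0,T];\dot B^{1/p}_{p,1}),$$
we prove the maximal regularity bound
$$\|w\|_{\wt L^\infty_T(\dot B^{-1+1/p}_{p,1})}+\|w\|_{L^1_T(\dot B^{1+1/p}_{p,1})}\lesssim \|w_0\|_{\dot B^{-1+1/p}_{p,1}}+\|f\|_{L^1_T(\dot B^{1/p}_{p,1})}+\text{(low order terms)}.$$
To do so, we localize with $\ddj$, write $(b\,\ddj w_y)_y$ plus the commutator $[\ddj,b]w_y$, and use the lower bound on $b$ to get parabolic decay $2^{2j}$ in $L^p$ (the standard $L^p$ energy method for $\ddj$-localized parabolic equations). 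The commutator is controlled in $\dot B^{1/p}_{p,1}$ by $\|b_y\|$-type bounds. Since $b$ only has critical regularity, we split $b=S_m b+(b-S_m b)$. The part $b-S_m b$ is small in $\dot B^{1/p}_{p,1}$ uniformly on $[0,T]$, by compactness of $\{b(t)\}$ given the continuity in time. The smooth part $S_m b$ yields commutator terms bounded by $C_m\|w\|_{L^1_T(\dot B^{1/p}_{p,1})}$, and interpolation gives $\|w\|_{L^1_T(\dot B^{1/p}_{p,1})}\le \varepsilon\|w\|_{L^1_T(\dot B^{1+1/p})}+C_\varepsilon T\|w\|_{L^\infty_T(\dot B^{-1+1/p})}$. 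This term is absorbed for small $T$.

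\emph{Step 2 (a priori bounds).} We have directly $\|a\|_{L^\infty_T(\dot B^{1/p})}\le\|a_0\|+\|v\|_{L^1_T(\dot B^{1+1/p})}$. We also have $\|\eta-\eta_0\|_{L^\infty}\lesssim\|v\|_{L^1_T(\dot B^{1+1/p})}$, which keeps $\inf\eta\ge\frac12\inf\eta_0$ and keeps $\nu(\eta)$ close to $\nu(\eta_0)$. The pressure term satisfies $\|Q(\eta)-Q(\bar\eta)\|_{L^1_T(\dot B^{1/p})}\le C\,T\,\|a\|_{L^\infty_T(\dot B^{1/p})}$, so it is small for small $T$. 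The velocity cannot be small in $L^1_T(\dot B^{1+1/p})$ merely from $T$ small. Instead, as usual, we compare $v$ with the free solution $v_L$ of $\partial_t v_L-\bar\nu\partial_y^2 v_L=0$ (or with coefficients frozen at $\eta_0$), for which $\|v_L\|_{L^1_T(\dot B^{1+1/p})}\to0$ as $T\to0$ by dominated convergence on the dyadic series. This shows that the iterates stay in a small ball of $E_T$ around $(a_0,v_L)$.

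\emph{Step 3 (contraction and uniqueness).} Differences $(\delta a,\delta v)$ satisfy the same linear system, with source terms
$$\bigl((\nu(\eta_1)-\nu(\eta_2))\,v_{2,y}\bigr)_y\quad\text{and}\quad \bigl(Q(\eta_1)-Q(\eta_2)\bigr)_y .$$
Both are estimated in the same norms, since $\delta a\in \dot B^{1/p}_{p,1}$ is an algebra, and no loss of derivatives occurs: $a$ is transported by nothing, since $a_t=v_y$. Hence we get a contraction in $E_T$ itself for small $T$. This gives existence, via the Banach fixed point or convergence of the iterates, and uniqueness. Time continuity of $a$ follows from $a_t=v_y\in L^1_T(\dot B^{1/p}_{p,1})$, and that of $v$ follows from the linear theory.

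The main obstacle is Step 1: maximal regularity for a parabolic operator in divergence form whose coefficient has only critical $\dot B^{1/p}_{p,1}$ regularity, in particular handling the commutator at negative regularity $-1+1/p$ when $p>2$. We would treat it first by the $S_m$ splitting above, possibly working with $w$'s antiderivative to place the estimate at positive regularity.
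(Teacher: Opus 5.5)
Your scheme is the one the paper intends when it points to the multi-dimensional proof of \cite{D6}. It consists of maximal regularity for $\partial_t-\partial_y(b\,\partial_y\cdot)$ with a critical coefficient, via the splitting $b=S_mb+(b-S_mb)$ and interpolation; comparison with the free solution, to make $\|v\|_{L^1_T(\dot B^{1+1/p}_{p,1})}$ small; and a contraction in $E_T$ itself, which works because $a_t=v_y$ has no convection and so no loss of derivatives.

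There is one genuine hole: the statement includes $p=1$, and your Step 1 does not cover it. Step 1 rests on the $L^p$ energy method for the localized equation, i.e.\ on testing with $|w_j|^{p-2}w_j$ and on the Bernstein-type lower bound
\[
(p-1)\int_\R |w_j|^{p-2}|\partial_y w_j|^2\,dy\geq c_p\,2^{2j}\|w_j\|_{L^p}^p .
\]
This bound is only available for $1<p<\infty$, and the known proofs give constants that deteriorate as $p\to1$. At $p=1$, testing with the sign of $w_j$ gives no such quantitative dissipation, so you do not get the damping $e^{-c2^{2j}t}$ for the variable-coefficient operator. You need a different argument at that endpoint. One option is to freeze the smooth coefficient $S_mb$ on intervals of length $\delta\ll 2^{-m}$ through a partition of unity. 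You then use the explicit heat kernel, for which localized decay holds in every $L^p$, $1\leq p\leq\infty$, and treat the localization errors as lower-order or small terms.

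Two further remarks. First, the difficulty you single out as the main obstacle is not one, precisely because of the divergence form. Put $\partial_y\bigl((b-S_mb)w_y\bigr)$ in the source: the product law at the level $\dot B^{1/p}_{p,1}$ makes it small. For $R_j:=[\ddj,S_mb]\partial_yw$, integrate by parts in the energy identity; for $p\geq2$ this gives
\[
\Bigl|\int_\R \partial_yR_j\,|w_j|^{p-2}w_j\,dy\Bigr|\leq (p-1)\|R_j\|_{L^p}\|\partial_yw_j\|_{L^p}\|w_j\|_{L^p}^{p-2}\lesssim 2^j\|R_j\|_{L^p}\|w_j\|_{L^p}^{p-1}.
\]
So the commutator is only needed at the positive regularity $1/p$. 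There the estimate $\sum_j2^{j/p}\|R_j\|_{L^p}\lesssim\|\partial_yS_mb\|_{\dot B^{1/p}_{p,1}}\|w\|_{\dot B^{1/p}_{p,1}}$ is standard, and no antiderivative is needed.

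Second, your difference estimate uses the variable-coefficient bound with $b=\nu(\eta_1)$, so it gives uniqueness without the smallness condition \eqref{eq:smalluniq} of Proposition \ref{p:uniqueness}. That is what Theorem \ref{thm:main0}, with large $a_0$, actually requires. Proposition \ref{p:uniqueness} alone does not provide it, so on this point your route is more complete than the paper's text.
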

The proof goes along the lines of that of  the multi-dimensional case in \cite{D6}. 
It is in fact much less technical since convection terms are absent. The details are left to the reader. 
\medbreak
The main part of the article consists in proving the following global existence statement: 
\begin{thm} \label{thm:main1}  
Take any  reference specific volume $\bar\eta>0.$ Assume that $Q$  satisfies 
\eqref{eq:stab} and $Q(\bar\eta)=0$ (with no loss of generality). Let $\bar\nu:=\nu(\bar\eta),$  ${\rm Ma}:=1/\sqrt{-Q'(\bar\eta)}$ and $\check\nu:={\rm Ma}\,\bar\nu.$
\smallbreak
 There exists a constant $c$  depending only on the functions $\check\nu$ and $\check Q$ defined by
 \begin{equation}\label{eq:renormalized}
\check Q(\wt\eta):= {\rm Ma}^{2} \bar\eta^{-1} Q(\bar\eta\wt\eta)\andf \check\nu(\wt\eta):=\bar\nu^{-1}\nu(\bar\eta\wt\eta)\end{equation}
 such that 
for any initial data $v_0$ and $\eta_0=\bar\eta+a_0$ satisfying  for some $p\in[2,4],$ 
\begin{equation}\label{eq:smalldata}
 {\rm Ma}^{-1}\|a_0\|_{\dot B^{-1/2}_{2,1}}^{\ell,\check\nu^{-1}}+\bar\nu\|a_0\|_{\dot B^{1/p}_{p,1}}^{h,\check\nu^{-1}}+
\|v_0\|_{\dot B^{-1/2}_{2,1}}^{\ell,\check\nu^{-1}}+\|v_0\|_{\dot B^{-1+1/p}_{p,1}}^{h,\check\nu^{-1}} \leq c\bar\eta\bar\nu\end{equation}
Equations   \eqref{eq:NSClag} with initial data $(\eta_0,v_0)$ have a unique global solution $(\eta=\bar\eta+a,v)$ such that
$$\displaylines{ (a^{\ell,\check\nu^{-1}},v^{\ell,\check\nu^{-1}})\in \cC(\R_+; \dot B^{-1/2}_{2,1})\cap L^1(\R_+;\dot B^{3/2}_{2,1}), 
\cr
a^{h,\check\nu^{-1}}\in \cC(\R_+; \dot B^{1/p}_{p,1})\cap L^1(\R_+;\dot B^{1/p}_{p,1})\andf
v^{h,\check\nu^{-1}}\in \cC(\R_+; \dot B^{-1+1/p}_{p,1})\cap L^1(\R_+;\dot B^{1+1/p}_{p,1}).}$$
Furthermore, there exists an absolute constant $C$ such that for all $t\in\R_+,$ we have
\begin{multline}\label{eq:final}
{\rm Ma}^{-1}\|a(t)\|_{\dot B^{-1/2}_{2,1}}^{\ell,\check\nu^{-1}} 
+\bar\nu \|a(t)\|_{\dot B^{1/p}_{p,1}}^{h,\check\nu^{-1}}  +  \|v(t)\|_{\dot B^{-1/2}_{2,1}}^{\ell,\check\nu^{-1}} 
 +\|v(t)\|_{\dot B^{-1+1/p}_{p,1}}^{h,\check\nu^{-1}}  \\
+\int_0^t \Bigl({\rm Ma}^{-2}\bigl(\check\nu\|a\|^{\ell,\check\nu^{-1}}_{\dot B^{3/2}_{2,1}} + \|a\|^{h,\check\nu^{-1}}_{\dot B^{1/2}_{2,1}}\bigr)+\bar\nu\|v\|_{\dot B^{3/2}_{2,1}}^{\ell,\check\nu^{-1}}
+\bar\nu\|v\|_{\dot B^{1+1/p}_{p,1}}^{h,\check\nu^{-1}}+\|v_t\|^{h,\check\nu^{-1}}_{\dot B^{-1+1/p}_{p,1}}\Bigr)\\
\lesssim {\rm Ma}^{-1}\|a_0\|_{\dot B^{-1/2}_{2,1}}^{\ell,\check\nu^{-1}} 
+\bar\nu \|a_0\|_{\dot B^{1/p}_{p,1}}^{h,\check\nu^{-1}}  +  \|v_0\|_{\dot B^{-1/2}_{2,1}}^{\ell,\check\nu^{-1}} 
 +\|v_0\|_{\dot B^{-1+1/p}_{p,1}}^{h,\check\nu^{-1}}.\end{multline}
 \end{thm}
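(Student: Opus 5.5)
First normalize away the physical constants. Set $\eta(t,y)=\bar\eta\,\wt\eta(\tau,z)$ and $v(t,y)=V\,\wt v(\tau,z)$, with $\tau=\lambda t$ and $z=\kappa y$. The constants are chosen so that:
\begin{itemize}
\item the reference specific volume becomes $1$;
\item the linearized pressure becomes $Q'=-1$ (sound speed one);
\item the viscosity becomes $1$.
\end{itemize}
This reduces the system to \eqref{eq:NSClag} with $Q$ and $\nu$ replaced by $\check Q$ and $\check\nu$ from \eqref{eq:renormalized}. The threshold $\check\nu^{-1}$ in the low/high splitting becomes $1$, and the smallness condition \eqref{eq:smalldata} becomes a pure smallness condition on the renormalized norms. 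Since every norm in \eqref{eq:final} is homogeneous, it suffices to prove the result for $\bar\eta=\bar\nu={\rm Ma}=1$, with threshold $2^{j_0}$.

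Next, write $a=\eta-1$ and linearize. The system becomes
\[
a_t-v_y=0,\qquad v_t-v_{yy}-a_y=f,
\]
with $f:=\bigl((\check\nu(\eta)-1)v_y\bigr)_y-\bigl(\check Q(\eta)+a\bigr)_y$. Its key feature is that $f$ is a full $y$-derivative of quantities like $a v_y$ and $a^2$; this is precisely the advantage of Lagrangian coordinates.

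The a priori estimates are then done frequency by frequency, treating low and high frequencies differently.

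\emph{Low frequencies} ($j\le j_0+1$). Use an $L^2$ energy method on $\ddj(a,v)$ with the Lyapunov functional
\[
\|\ddj a\|^2+\|\ddj v\|^2-\epsilon\int \ddj a\,\ddj v_y
\]
(hypocoercivity in the style of Kawashima). This gives damping at rate $2^{2j}$ for both components. The result is the $\wt L^\infty(\dot B^{-1/2}_{2,1})\cap L^1(\dot B^{3/2}_{2,1})$ bound for $(a,v)^\ell$, with a source term controlled by $\|f\|^\ell_{L^1(\dot B^{-1/2}_{2,1})}$.

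\emph{High frequencies} ($j\ge j_0$). Use the effective velocity $w:=v-\partial_y^{-1}a$, or equivalently the viscous effective flux. It satisfies
\[
w_t-w_{yy}=-w+\partial_y^{-1}a+\dots,
\]
while $a$ obeys the damped equation
\[
a_t+a=w_y+\text{(nonlinear)},
\]
where the nonlinear term comes from writing $(\nu v_y)$ suitably. A cleaner option is to work with $\partial_y\log$-type quantities such as $v_y-\check\nu$-weighted terms. Parabolic maximal regularity in $\wt L^1(\dot B^{1+1/p}_{p,1})$ handles $w$, and the damped transport-free ODE handles $a$, giving $a^h\in L^1(\dot B^{1/p}_{p,1})$. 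Choosing $j_0$ large makes the lower-order cross terms absorbable. Note that $a$ has no convection term in Lagrangian coordinates, so there is no loss of derivatives.

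The main obstacle is bounding the nonlinear terms, for example $\|(a v_y)_y\|$, in both regimes. This requires product laws in $\dot B^{1/p}_{p,1}\times\dot B^{1/p}_{p,1}$ for the high part, which is an algebra. For the low part, the term must land in $\dot B^{-1/2}_{2,1}$ after one derivative, so we need $a v_y\in L^1(\dot B^{1/2}_{2,1})$. The range $p\in[2,4]$ is exactly what makes this work, via the embedding $\dot B^{1/p}_{p,1}\hookrightarrow L^\infty$ and the product estimate
\[
\dot B^{1/p}_{p,1}\times\dot B^{1/p-1/2}_{p,1}\to\dot B^{-1/2}_{2,1},
\]
which requires $2/p\ge 1/2$. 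Mixed low–high interactions must be handled by paraproduct decomposition, interpolating the low-frequency bounds between $\dot B^{-1/2}_{2,1}$ and $\dot B^{3/2}_{2,1}$. Composition estimates for $\check Q(\eta)+a=O(a^2)$ and $\check\nu(\eta)-1$ use $\|a\|_{L^\infty}$ small, which follows from the embeddings above.

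With the estimates in hand, we close the argument:
\begin{enumerate}
\item Combining the low- and high-frequency bounds gives $X(t)\lesssim X(0)+X(t)^2$, where $X$ is the left-hand side of \eqref{eq:final}.
\item A bootstrap with $c$ small then yields \eqref{eq:final} globally.
\item Global existence follows from Theorem~\ref{thm:main0} together with a continuation argument, using $p<\infty$ and the positivity of $\eta$, which comes from the smallness of $\|a\|_{L^\infty}$.
\item Uniqueness follows from the local statement.
\end{enumerate}
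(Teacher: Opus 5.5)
Your architecture is the paper's: the same normalization, the nonlinearity written as a full $y$-derivative, a hypocoercive functional for $j\le j_0$, an effective velocity with a damped equation for $a$ for $j\ge j_0$, paraproducts for the nonlinear terms, and a bootstrap. However, two steps fail as written.

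\textbf{Sign errors.} You have used the Eulerian signs, but in mass Lagrangian coordinates the acoustic coupling has the opposite sign.
\begin{itemize}
\item Take $\epsilon>0$ and differentiate your cross term $-\epsilon\int\ddj a\,\ddj v_y$, using $a_t=v_y$ and $v_t=v_{yy}+a_y+f$. This puts $+\epsilon\|\partial_y\ddj a\|_{L^2}^2$ into $\frac{d}{dt}$ of your functional, so $a$ gets no dissipation. You need $+\epsilon\int\ddj a\,\ddj v_y=-\epsilon\int\ddj v\,\partial_y\ddj a$, which is the paper's $-2\kappa\int v_ja_{j,y}$.
\item Likewise, $w:=v-\partial_y^{-1}a$ gives $a_t-a=w_y$ and $w_t-w_{yy}=f+2a_y-v$. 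That is anti-damping for $a$ and a top-order forcing for $w$. The correct choice is $w:=v+\partial_y^{-1}a$. It gives exactly $a_t+a=w_y$ and $w_t-w_{yy}=f+v$; no nonlinear term is needed in the equation of $a$, and $v$ is absorbed thanks to the factor $2^{-2j_0}$.
\end{itemize}

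\textbf{Low-frequency nonlinear bound.} The more serious gap is the bound of $f$ in $L^1_t(\dot B^{-1/2}_{2,1})$ at low frequencies when $p\in(2,4]$. For $p=2$ your sketch can be completed directly, since $\dot B^{1/2}_{2,1}$ is an algebra.
\begin{itemize}
\item The product law you quote is false. Under $z\mapsto z(\lambda\,\cdot)$ the two norms on the left scale like $\lambda^{0}$ and $\lambda^{-1/2}$, while $\dot B^{-1/2}_{2,1}$ scales like $\lambda^{-1}$.
\item More importantly, smallness of $\|a\|_{L^\infty}$ is not the right information. The high frequencies of $v_y$ are only controlled in $L^p$-based norms with $p>2$. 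To put $T_{L(a)}v_y$ (and likewise $T_{K(a)}a$) into $L^2$, you need $L(a)\in L^4$, and no such bound follows from $a\in\dot B^{1/p}_{p,1}$.
\item The paper extracts this from the low-frequency $\dot B^{-1/2}_{2,1}\cap\dot B^{3/2}_{2,1}$ bounds, via interpolation and $\dot B^{1/4}_{2,1}\hookrightarrow L^4$: $\|F(a)\|_{L^{8/3}_t(L^4)}\lesssim\cX_4(t)$.
\item It then reduces everything to $L^4$-based norms through $\dot B^\sigma_{2,1}\hookrightarrow\dot B^{\sigma-1/2+1/p}_{p,1}\hookrightarrow\dot B^{\sigma-1/4}_{4,1}$; this is where $p\le4$ really enters.
\item Finally it applies paraproduct and remainder bounds from $L^4$-based spaces into $L^2$-based ones, such as $T:L^4\times\dot B^\sigma_{4,1}\to\dot B^\sigma_{2,1}$ and $R:\dot B^{\sigma'}_{4,1}\times\dot B^{\sigma}_{4,1}\to\dot B^{\sigma+\sigma'}_{2,1}$ for $0<\sigma+\sigma'\le1/2$. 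These come with H\"older exponents $(8/3,8/5)$, $(\infty,1)$ or $(2,2)$ in time, plus the splitting $K(a)=K(a^\ell)+K^h(a)$ to handle $T_aK(a)$.
\end{itemize}
Your remark about interpolating the low-frequency bounds points the right way, but as written this estimate does not close.

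\textbf{Existence.} Your route (Theorem~\ref{thm:main0} plus continuation) differs from the paper's iteration with uniform hybrid bounds and a Cauchy argument in $F_T$. It can work, but you must supply two things. First, show that the local solution keeps its low frequencies in $\dot B^{-1/2}_{2,1}$, so that your bootstrap quantity is finite and continuous. Second, give a continuation criterion: in critical spaces the lifespan from Theorem~\ref{thm:main0} is not controlled by the norm of the data alone. The iteration avoids both issues.
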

\begin{rmk} The restriction $p\leq 4$ comes from the fact that the high frequencies of the data
 belong to $L^p$-type Besov spaces while low frequencies have to be bounded in $L^2$-type spaces.
  As in \cite{CD,CMZ}, larger values of $p$ might  be considered
if the high frequencies of the data also belong to   Besov spaces of type $\dot B^\sigma_{2,1}$  
with \emph{supercritical} regularity. 

Very recently, Guo, Song and Yang  pointed out in \cite{GSY} that  for the
three-dimensional compressible Navier-Stokes equations,  one can  do without  $L^2$-type regularity 
in low frequencies, if it is replaced by a suitable supercritical regularity assumption 
in spaces of type $L^q$.  
Although  we suspect this to be also possible in the one-dimensional case, we chose not to pursue this direction to avoid supplementary technicalities.
\end{rmk}
Our second result states optimal algebraic rates of convergence for $(\eta,u)(t)$ to $(\bar\eta,0)$ 
when~$t$ goes to infinity.  To avoid technicalities, we only consider the case $p=2$ (as in \cite{DX}, 
it is expected that similar estimates hold true for the solutions given by Theorem \ref{thm:main1}
with  $p>2$), and for the sake of conciseness, we assume  (with no loss of generality, see the beginning of the next section)
that $\bar\eta={\rm Ma}=\bar\nu=1.$
\begin{thm} \label{thm:main2}  Let $(\eta=1+a,u)$ be a solution given by Theorem \ref{thm:main1}, 
under the smallness condition \eqref{eq:smalldata}.
Set  $\langle t\rangle:=\sqrt{1+t^2}$ and 
$$\cX_{2,0}=\|a_0\|_{\dot B^{-1/2}_{2,1}}
+\|a_0\|_{\dot B^{1/2}_{2,1}}  +  \|v_0\|_{\dot B^{-1/2}_{2,1}}.$$
 Then we have for all $t\geq0,$
\begin{align}\label{eq:decay}
 \cD(t)\leq C\cX_{2,0}\  &\hbox{ where }\  \cD(t):=\cD^\ell(t)+\cD^h_a(t)+\wt\cD^h_v(t)\\
 \with &\cD^\ell(t):=\|\langle \tau\rangle (a,v)\|_{\wt L^\infty_t(\dot B^{3/2}_{2,1})}^{\ell,1}
 + \|\langle\tau\rangle(a,v)\|^{\ell,1}_{\wt L^2_t(\dot B^{5/2}_{2,1})},\nonumber\\
   &\cD^h_a(t):=\|\langle \tau\rangle^{3/2} a\|_{\wt L^\infty_t(\dot B^{1/2}_{2,1})}^{h,1}\andf
  \wt\cD^h_v(t):=\|\tau \langle \tau\rangle^{1/2} v\|_{\wt L^\infty_t(\dot B^{3/2}_{2,1})}^{h,1}.\nonumber
  \end{align}
\end{thm}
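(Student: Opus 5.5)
We argue by time-weighted energy estimates in the spirit of \cite{DX}, applied to the system written in the variables $(a,v)$ with $\bar\eta={\rm Ma}=\bar\nu=1$:
$$a_t-v_y=0,\qquad v_t-v_{yy}-a_y=f_y,\quad f:=(\nu(1+a)-1)v_y-(Q(1+a)+a).$$
The nonlinearity is a total derivative of a quantity that is at least quadratic. The whole argument is a bootstrap on $\cD(t)$. We will show
$$\cD(t)\lesssim \cX_{2,0}+\cD(t)^2+\cX_{2,0}\,\cD(t),$$
and then close it using the smallness of the global norm from Theorem \ref{thm:main1}, i.e. \eqref{eq:final} with $p=2$. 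Along the way we freely use the uniform bound $\cX(t)\lesssim\cX_{2,0}$ given by \eqref{eq:final}.

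\emph{Low frequencies.} For $j\le j_0$ the linearized operator behaves like the heat semigroup on $(a,v)$: the damped-wave structure gives symbol eigenvalues $\approx -2^{2j}$. We localize with $\ddj$ and build a Lyapunov functional $\cL_j\simeq\|\ddj(a,v)\|_{L^2}^2$, adding the usual cross term $\varepsilon\int \ddj a\,\ddj v$ type correction, so that
$$\tfrac{d}{dt}\cL_j+c2^{2j}\cL_j\lesssim \|\ddj f_y\|_{L^2}\sqrt{\cL_j}.$$
We multiply by $\langle t\rangle$ and integrate. The term $\int\|\ddj(a,v)\|$ coming from $\frac{d}{dt}\langle t\rangle$ is absorbed by interpolation: $2^{3j/2}\|\ddj z\|\le 2^{-j/2}\|\ddj z\|^{1/2}\cdot(2^{5j/2}\|\ddj z\|)^{1/2}$ up to weights, which is the standard trick giving $\langle t\rangle$ decay for the $\dot B^{3/2}$ norm from $\dot B^{-1/2}$ data. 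Thus $\cD^\ell$ is controlled by $\cX_{2,0}+\|\langle\tau\rangle f\|^{\ell}_{\wt L^1_t(\dot B^{5/2}_{2,1})}$. We then bound $\langle\tau\rangle f$ in $L^1(\dot B^{5/2})$ at low frequency, where it suffices to use $\dot B^{1/2}$-type norms after losing powers of $2^j$. We use product laws in $\dot B^{1/2}_{2,1}$ together with splitting the time weight between factors. For instance, $\langle\tau\rangle a^2$ is handled by $\|a\|_{L^\infty_t}\,\|\langle\tau\rangle a\|_{L^1_t}$-type bounds, combining $\cD$ with the integrability in \eqref{eq:final}.

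\emph{High frequencies.} For $a^{h}$ we use the effective-flux/damping structure. Setting $w=v- \partial_y^{-1}$-free combinations, or simply the localized energy method with the cross term, yields $\frac{d}{dt}\|\ddj a\|+c\|\ddj a\|\lesssim$ sources. This exponential damping produces any polynomial weight, in particular $\langle t\rangle^{3/2}$, paid for by $\langle t\rangle^{3/2}$ times the low-frequency coupling $\|v\|^{\ell}$. That coupling decays like $\langle t\rangle^{-5/4}$ in $\dot B^{1/2}$ from the low-frequency heat decay; to get $3/2$ we use the $\dot B^{3/2}$ bound $\langle t\rangle^{-1}$ combined with $\dot B^{5/2}$ integrability. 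For $v^h$ we use parabolic smoothing with the weight $\tau\langle\tau\rangle^{1/2}$. Since $v_0\in\dot B^{-1/2}$ only, the factor $\tau$ vanishing at $0$ is needed to gain two derivatives. The source $\tau\langle\tau\rangle^{1/2}(a_y+f_y)$ is bounded using $\cD^h_a$ and the products.

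\emph{Main obstacle.} The key difficulty is the nonlinear estimates in $f$, specifically the term $(\nu(1+a)-1)v_y$ at high frequency with the weight $\tau\langle\tau\rangle^{1/2}$. It involves $a^h$, which has only $\dot B^{1/2}$ regularity, multiplied by $v_y$ measured in $\dot B^{1/2}$. The weights must be distributed as $\langle\tau\rangle^{3/2}$ on $a$ and $\tau$ on $v_y$ near $t=0$. We expect the loss near $t=0$ to be compensated by the $L^1(\dot B^{3/2})$ bound from Theorem \ref{thm:main1}, and the large-time behavior by the decay of $\cD^h_a$. Composition estimates for $\nu(1+a)-1$ and $Q(1+a)+a$ (quadratic in $a$) follow from standard Besov composition lemmas. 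Finally, since $\cD(t)$ is continuous in $t$ and small at $t=0$, the inequality closes for $\cX_{2,0}\le c$.
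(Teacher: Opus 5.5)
There is a genuine gap in your low-frequency nonlinear estimate, which is the heart of the proof. You propose to bound $\|\langle\tau\rangle f\|^{\ell}_{\wt L^1_t(\dot B^{5/2}_{2,1})}$ by dropping to $\dot B^{1/2}_{2,1}$ and using ``$\|a\|_{L^\infty_t}\|\langle\tau\rangle a\|_{L^1_t}$-type bounds''. This fails.

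At low frequencies, each derivative you give up costs a factor $\langle t\rangle^{-1/2}$ of decay. Since $\|a(t)\|_{\dot B^{1/2}_{2,1}}$ only decays like $\langle t\rangle^{-1/2}$, the quantity $\langle\tau\rangle\|a^2\|_{\dot B^{1/2}_{2,1}}\lesssim\bigl(\langle\tau\rangle^{1/2}\|a\|_{\dot B^{1/2}_{2,1}}\bigr)^2$ is merely bounded. The available information then gives at best $\int_0^t\langle\tau\rangle\|a\|^2_{\dot B^{1/2}_{2,1}}\,d\tau\lesssim t\,\cX_2\cD$. Likewise, $\|\langle\tau\rangle a\|_{L^1_t(\dot B^{1/2}_{2,1})}$ is controlled neither by $\cD$ nor by \eqref{eq:final}; it is infinite for generic data.

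Keeping the full regularity does not rescue the pointwise-in-time approach. Even with the sharp heat-like rates $\|a^\ell\|_{L^\infty}\sim\langle t\rangle^{-1/2}$, $\|a^\ell\|_{\dot B^{5/2}_{2,1}}\sim\langle t\rangle^{-3/2}$ and $\|a^h\|_{\dot B^{1/2}_{2,1}}\sim\langle t\rangle^{-3/2}$, product bounds give integrands of size $\langle t\rangle^{-1}$. This holds both for $\langle t\rangle (a^\ell)^2$ and for the cross term $\langle t\rangle a^\ell a^h$, and $\langle t\rangle^{-1}$ is not integrable. The quadratic terms sit exactly at the critical decay, so splitting the weight as you describe cannot close.

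The paper avoids this as follows.
\begin{itemize}
\item It works with the squared Lyapunov inequality \eqref{eq:lffinal}, multiplied by $\langle t\rangle^2 2^{3j}$. Young's inequality pairs the $\dot B^{5/2}$ dissipation with a $\dot B^{1/2}$ source, which puts the weighted source in $\wt L^2_t$: $\cD^\ell(t)\lesssim\cX_{2,0}+\|\langle\tau\rangle g\|^\ell_{\wt L^2_t(\dot B^{1/2}_{2,1})}$.
\item Each product is then split so that the weighted factor lies in a $\cD$-norm and the unweighted one in a Chemin--Lerner norm bounded by $\cX_2$. For instance, $\|\langle\tau\rangle(a^\ell)^2\|_{\wt L^2_t(\dot B^{3/2}_{2,1})}\lesssim\|a\|^\ell_{\wt L^\infty_t(\dot B^{-1/2}_{2,1})}\|\langle\tau\rangle a\|^\ell_{\wt L^2_t(\dot B^{5/2}_{2,1})}$.
\item Similarly, $\|\langle\tau\rangle a^\ell a^h\|_{\wt L^2_t(\dot B^{1/2}_{2,1})}\lesssim\|\langle\tau\rangle a\|^h_{\wt L^\infty_t(\dot B^{1/2}_{2,1})}\|a\|^\ell_{\wt L^2_t(\dot B^{1/2}_{2,1})}$.
\item The $\ell^1$-in-$j$ summation built into the tilde norms is what removes the logarithm.
\end{itemize}

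Your high-frequency discussion also misidentifies the mechanism. There is no linear coupling between $a^h$ and $v^\ell$, and $\|v\|^\ell_{\dot B^{1/2}_{2,1}}$ decays like $\langle t\rangle^{-1/2}$, not $\langle t\rangle^{-5/4}$. The rate $\langle t\rangle^{-3/2}$ for $a^h$ is dictated by nonlinear terms such as $((a^\ell)^2)_y$. These are controlled through $\|t a\|^\ell_{\wt L^\infty_t(\dot B^{3/2}_{2,1})}\|t^{1/2}a\|_{\wt L^\infty_t(\dot B^{1/2}_{2,1})}$. The ingredient you are missing there is the interpolation \eqref{eq:keyinterpo}, $\|\langle t\rangle^{1/2}a\|_{\wt L^\infty_t(\dot B^{1/2}_{2,1})}\lesssim\sqrt{\cX_2(\cD^\ell+\cD^h_a)}$.

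By contrast, the term $a\,v_y^h$ that you single out as the main obstacle is harmless. It is bounded by $\|a\|_{\wt L^\infty_t(\dot B^{1/2}_{2,1})}$ times the weighted high-frequency norm of $v_y$, and is absorbed by smallness.
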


\begin{rmk}   For general data with critical regularity, we believe the low frequency decay 
given by  $\cD^\ell$ to be  optimal, inasmuch as it is the one of the corresponding
free heat equation. For example, one can deduce from \eqref{eq:decay} and interpolation with \eqref{eq:final} that
$$\|(a,v)(t)\|_{L^2}\leq Ct^{-1/2} \cX_{2,0},$$ which is the optimal inequality for the heat equation
 supplemented with data in $\dot B^{-1/2}_{2,1}$. 
 As established   in  \cite{BSXZ} by Brandolese, Shou, Xu and Zhang,
 this rate is also optimal for the multi-dimensional Navier-Stokes equations supplemented 
 with  `generic' data.  
We expect a similar result for \eqref{eq:NSC}, but demonstrating this would considerably 
lengthen the article.

As for  high frequency decay,  the exponent $\tau^{3/2}$ is sharp with our method (and improves  the rate $d/2+1-\varepsilon$ obtained in \cite{DX} in dimension $d\geq2$).  It is given by  the quadratic term $((a^\ell)^2)_y$
coming from the pressure term  that cannot have faster decay 
than $\langle t\rangle^{-3/2}.$ 
\end{rmk}
\begin{rmk} It is worth comparing our results with those obtained by K. Chen et al in \cite{CHHN}
in the constant viscosity case.
There, $\eta_0$ only has to be in the $BV$ space,  and can thus be discontinuous at a countable number of points.
In contrast, since the space $\dot B^{1/2}_{2,1}$ is continuously embedded in the space of continuous functions, 
our initial specific volume has to be continuous. 
As for the velocity, in \cite{CHHN} it is required  to have \emph{positive} regularity (namely to be in the space $W^{s,1}$ for 
some $s>0$) while, here,  it can be in a space with \emph{negative} regularity 
like $\dot B^{-1/2}_{2,1}$ in low frequencies, and even in $\dot B^{-3/4}_{4,1}$ in high  frequencies.
Note that \eqref{eq:decay} allows to recover the same decay rate $t^{-1/2}$ for $\|(a,v)(t)\|_{L^\infty}$ as 
 in \cite{CHHN}. It  encodes much more information, though.
\end{rmk}

The last part of the paper is dedicated to  studying  the behavior of the global 
solutions $(\eta,v)$ given by Theorem \ref{thm:main1} when the viscosity tends to infinity, an asymptotic that does not seem to have been studied much to date. 
It has to be noted that Inequality \eqref{eq:final}  just implies boundedness of the solution in some
functional space. However,  the  more accurate analysis based on the explicit computations
for the linearized equations recalled in the Appendix, hint that $v\to0$ while $a\to a_0$ when 
$\bar\nu$ goes to $\infty.$  

To exhibit  more accurate asymptotics,  let us perform a  `diffusive rescaling', namely,
\begin{equation}\label{eq:diffrescaling}(\eta,v)(t,y)=(\check \eta,\bar\nu^{-1} \check v)(\bar \nu^{-1} t,y).\end{equation}
The equations for $(\check \eta,\check v)$  read: 
\begin{equation}\label{eq:NL3}\left\{\begin{array}{l}
  \check \eta_t - \check v_y =0,\\[3pt]
\bar\nu^{-2}\check v_t +  (Q(\check \eta))_y - \bar\nu^{-1}( \nu(\check \eta) \check v_y)_y=0,\\[3pt]
(\check\eta,\check v)|_{t=0}=(\eta_0,\bar\nu v_0).
 \end{array}\right.
\end{equation}
We thus expect  $\bar\nu^{-1}\nu(\check \eta) \check v_y- Q(\check \eta)$ to tend to $0$ when $\bar\nu$ goes to $\infty.$
Plugging this information in  the first equation  of \eqref{eq:NL3} gives  
that   $\check \eta$   tends to the solution $\check \theta$ of the ordinary differential equation
\begin{equation}\label{eq:limit1}
\check \theta_t-\bar\nu(\nu^{-1}Q)(\check \theta)=0,
\end{equation}
and thus, eventually, 
\begin{equation}\label{eq:limit2}
\check v_y \to \bar\nu(\nu^{-1}Q)(\check \theta).\end{equation}
The last part of the paper is devoted to justifying this heuristics. Again,  to simplify the presentation, we
restrict ourselves to the functional setting of Theorem \ref{thm:main1} with  $p=2.$
\begin{thm}\label{thm:main3}
There exists a universal constant $c>0$ such that if 
$a_0:=\eta_0-\bar\eta$ and $v_0$ satisfy
\begin{equation}\label{eq:smallnessnu}\check\nu^{-1}\|a_0\|_{\dot B^{-1/2}_{2,1}} + \|a_0\|_{\dot B^{1/2}_{2,1}}  
 + \bar\nu^{-1} \|v_0\|_{\dot B^{-1/2}_{2,1}}\leq c\bar\eta,\end{equation}
 then Equations \eqref{eq:NL3} have a unique global solution $(\check \eta=\bar\eta +\check a,\check v)$ satisfying for all $t\in\R_+,$
 \begin{multline}\label{eq:finalbis}
 \check\nu^{-1}\|\check a(t)\|_{\dot B^{-1/2}_{2,1}}+ \|\check a(t)\|_{\dot B^{1/2}_{2,1}}
 +  \bar\nu^{-2}\|\check v(t)\|_{\dot B^{-1/2}_{2,1}}
+ {\rm Ma}^{-2}\int_0^t \bigl(\check\nu\|\check a\|^{\ell,\check\nu^{-1}}_{\dot B^{3/2}_{2,1}} + \|\check a\|^{h,\check\nu^{-1}}_{\dot B^{1/2}_{2,1}}
\bigr)\\
+\int_0^t \bigl(\bar\nu^{-2}\|\check v_t\|^{h,\check\nu^{-1}}_{\dot B^{-1/2}_{2,1}}+\|\check v_{y}\|_{\dot B^{1/2}_{2,1}}\bigr)
\lesssim \check\nu^{-1}\|a_0\|_{\dot B^{-1/2}_{2,1}} + \|a_0\|_{\dot B^{1/2}_{2,1}} 
 +  \bar\nu^{-1}\|v_0\|_{\dot B^{-1/2}_{2,1}}.\end{multline}
 If, moreover, $(a_0,v_0)$ is bounded   in the space 
 $\dot B^{-\sigma}_{2,1}$ for some $\sigma\in(1/2,3/2),$   then 
 $\check \eta$ converges uniformly on $\R_+\times \R$ to the global solution  
 $$\check\theta\in \biggl(\bar\eta +\cC_b(\R_+;\dot B^{1/2}_{2,1})\cap L^1(\R_+;\dot B^{1/2}_{2,1})\biggr)$$ 
  of \eqref{eq:limit1} supplemented with 
 initial data $\eta_0,$ with the rate $\check\nu^{-\frac{2\sigma-1}{\sigma+3/2}}\cdotp$
 
 Finally,  \eqref{eq:limit2} holds true strongly in $L^1_{loc}(\R_+;\dot B^{1/2}_{2,1}).$  
  \end{thm}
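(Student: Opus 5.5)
The plan is in three steps: obtain \eqref{eq:finalbis} from Theorem~\ref{thm:main1} by rescaling, control the error $\check\eta-\check\theta$ through the effective flux, and interpolate between two bounds on that error to get the rate.

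\textbf{Step 1: global bounds.} By the scaling remarks at the beginning of the next section, I reduce to $\bar\eta=1$ and ${\rm Ma}=1$, so $\check\nu=\bar\nu$. I then apply Theorem~\ref{thm:main1} with $p=2$ to the original unknowns $(\eta,v)$. For $p=2$, the high-frequency norm of $a$ in $\dot B^{1/2}_{2,1}$ controls the low-frequency norm in $\dot B^{-1/2}_{2,1}$ up to the factor $\check\nu$, because the threshold frequency is $\check\nu^{-1}$. Hence \eqref{eq:smallnessnu} implies \eqref{eq:smalldata}; one checks this after dividing \eqref{eq:smalldata} by $\bar\nu$. Undoing the diffusive rescaling \eqref{eq:diffrescaling} then gives \eqref{eq:finalbis}. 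Concretely, $dt=\bar\nu^{-1}d\check t$, $\check v=\bar\nu v$, and each term of \eqref{eq:final}, divided by $\bar\nu$, becomes the corresponding term of \eqref{eq:finalbis}. Since $\|a\|_{\dot B^{1/2}_{2,1}}\lesssim\check\nu^{-1}\|a\|^{\ell}_{\dot B^{-1/2}_{2,1}}+\|a\|^{h}_{\dot B^{1/2}_{2,1}}$, the full norm in \eqref{eq:finalbis} is also controlled. Uniqueness is inherited from Theorem~\ref{thm:main1}.

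\textbf{Step 2: the limit ODE and the error equation.} The ODE \eqref{eq:limit1} is solved pointwise in $y$. Since $Q(\bar\eta)=0$ and $\bar\nu(\nu^{-1}Q)'(\bar\eta)=-{\rm Ma}^{-2}<0$, the solution $\check\theta-\bar\eta$ decays exponentially. By composition estimates in $\dot B^{1/2}_{2,1}$ (an algebra), a fixed point in $\cC_b\cap L^1(\R_+;\dot B^{1/2}_{2,1})$ gives $\check\theta$ with the stated regularity, for small $a_0$. Next I introduce the effective flux $G:=\bar\nu^{-1}\nu(\check\eta)\check v_y-Q(\check\eta)$. The first equation of \eqref{eq:NL3} becomes
\[
\check\eta_t=\bar\nu\nu^{-1}(\check\eta)\bigl(Q(\check\eta)+G\bigr),
\]
and the second reads $\bar\nu^{-2}\check v_t=G_y$. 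Setting $\delta:=\check\eta-\check\theta$, I get
\[
\delta_t=\bar\nu\bigl[(\nu^{-1}Q)(\check\eta)-(\nu^{-1}Q)(\check\theta)\bigr]+\bar\nu\nu^{-1}(\check\eta)G .
\]
The linear part is dissipative with rate ${\rm Ma}^{-2}$. A Gronwall argument in $L^\infty_t(\dot B^{1/2}_{2,1})$, which controls the uniform norm, then bounds $\delta$ by $\bar\nu\int\|G\|_{\dot B^{1/2}_{2,1}}$. Smallness absorbs the nonlinear differences.

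\textbf{Step 3: estimate of $G$ and the rate (main obstacle).} Since $G_y=\bar\nu^{-2}\check v_t$, I need $\bar\nu^{-1}\check v_t$ to be small in $L^1(\dot B^{-1/2}_{2,1})$. On high frequencies, \eqref{eq:finalbis} only gives boundedness of $\bar\nu^{-2}\check v_t$, so I have to gain on low frequencies and pay on high ones.

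On low frequencies $|\xi|\lesssim\check\nu^{-1}$, the explicit linear analysis recalled in the Appendix shows that the relevant modes behave like a heat flow with diffusion $\check\nu^{-1}$. There I will use the extra assumption $(a_0,v_0)\in\dot B^{-\sigma}_{2,1}$ to bound the error by a positive power of the cut-off $2^J$, roughly $2^{J(\sigma-1/2)}$. On high frequencies $2^j>2^J$, I will use \eqref{eq:finalbis} together with the factor $\check\nu^{-1}2^{-j}$ coming from $G_y$, which gives a bound like $\check\nu^{-1}2^{-J}$ (with the transition weight $2^{j\cdot 3/2}$). Optimizing in $J$ should produce the rate $\check\nu^{-\frac{2\sigma-1}{\sigma+3/2}}$; the precise exponent bookkeeping is the expected difficulty. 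Nonlinear contributions are quadratic and controlled by \eqref{eq:finalbis}.

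Finally, \eqref{eq:limit2} holds because $\check v_y=\bar\nu\nu^{-1}(\check\eta)(Q(\check\eta)+G)$. The convergence $G\to0$ and $\check\eta\to\check\theta$ in $L^1_{loc}(\dot B^{1/2}_{2,1})$ follow from the bounds above and composition estimates.
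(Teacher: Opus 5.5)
Your Step 1 is sound and matches the paper: Theorem~\ref{thm:main1} with $p=2$, then the diffusive rescaling, with the threshold $\check\nu^{-1}$ unchanged. Your flux is also the paper's quantity: since $\nu(\bar\eta)=\bar\nu$, one has $\bar\nu\nu^{-1}(\check\eta)G=\check w_y:=\check v_y-\bar\nu(\nu^{-1}Q)(\check\eta)$.

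The genuine gap is Step 3: nothing in your argument makes this flux \emph{small} at high frequencies. Estimate \eqref{eq:finalbis} only gives
$\int_0^t\|G\|^{h,\check\nu^{-1}}_{\dot B^{1/2}_{2,1}}\approx\int_0^t\bar\nu^{-2}\|\check v_t\|^{h,\check\nu^{-1}}_{\dot B^{-1/2}_{2,1}}\lesssim\cI^{-1/2}_{0,\bar\nu}$, which is bounded but not small. The identity $G_y=\bar\nu^{-2}\check v_t$ does not by itself produce any factor like $\check\nu^{-1}2^{-j}$. The missing idea is that the flux obeys its own parabolic equation with diffusivity $\bar\nu^2$:
\[
(\check w_y)_t-\bar\nu^2(\wt\nu\,\check w_y)_{yy}=-\bar\nu(\nu^{-1}Q)'(\check\eta)\,\check v_y,\qquad \wt\nu:=\bar\nu^{-1}\nu .
\]
Its source is of size $\check v_y$, hence bounded in $L^1(\R_+;\dot B^{1/2}_{2,1})$ by \eqref{eq:finalbis}. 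Maximal regularity above a cut-off $\alpha$ then gives
$\|\check w_y\|^{h,\alpha}_{L^1_t(\dot B^{1/2}_{2,1})}\lesssim \bar\nu^{-1}\|v_0\|_{\dot B^{-1/2}_{2,1}}+(\check\nu\alpha)^{-2}\cI^{-1/2}_{0,\bar\nu}$ plus quadratic terms; the first term is the initial layer. The gain is $(\check\nu\alpha)^{-2}$, not $(\check\nu 2^J)^{-1}$. Balancing your guessed bound against $2^{J(\sigma-1/2)}$ would only give $\check\nu^{-(2\sigma-1)/(2\sigma+1)}$. Balancing $(\check\nu\alpha)^{-2}$ against $\alpha^{\sigma-1/2}$ gives $\alpha\approx\check\nu^{-2/(\sigma+3/2)}\gg\check\nu^{-1}$ and exactly the stated rate.

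Three further points need repair.

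\textbf{(i) Time integrability.} A Gronwall bound of $\delta$ by $\int_0^\infty\|\check w_y\|_{\dot B^{1/2}_{2,1}}$ cannot be uniform on $\R_+$. For $\sigma<3/2$, the lowest frequencies of $\bar\nu(\nu^{-1}Q)(\check\eta)\approx-{\rm Ma}^{-2}\check a$ are in general only in $L^2(\R_+;\dot B^{1/2}_{2,1})$. These modes are damped acoustic waves with rate $\sim\bar\nu^2\xi^2$, not a heat flow with diffusion $\check\nu^{-1}$. The paper therefore splits $\check w_y=A+B$, with $A:=\check w_y^{h,\alpha}+\check v_y^{\ell,\alpha}\in L^1_t$ and $B:=-\bar\nu\bigl((\nu^{-1}Q)(\check\eta)\bigr)^{\ell,\alpha}\in L^2_t$. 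It then uses the damping ${\rm Ma}^{-2}\delta$ in the error equation to absorb the $L^2_t$ forcing.

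\textbf{(ii) Propagating negative regularity.} The gain $\alpha^{\sigma-1/2}$ for $\check v_y^{\ell,\alpha}$ and $B$ requires propagating $\dot B^{-\sigma}_{2,1}$ regularity (and $\dot B^{1-\sigma}_{2,1}$ for $\check a$) along the nonlinear flow. This is not a consequence of \eqref{eq:finalbis}. It follows from \eqref{eq:linear} with $s=s'=-\sigma$ and the product law $\dot B^{1/2}_{2,1}\times\dot B^{1-\sigma}_{2,1}\to\dot B^{1-\sigma}_{2,1}$, which is precisely where $\sigma<3/2$ enters.

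\textbf{(iii) A stray factor $\bar\nu$ in Step 2.} Since $\bar\nu\nu^{-1}(\check\eta)\approx1$, the forcing in the $\delta$-equation has the size of $G$. So what must be small is $\bar\nu^{-2}\check v_t$ in $L^1(\dot B^{-1/2}_{2,1})$, not $\bar\nu^{-1}\check v_t$. The latter is false, since the initial layer alone makes it of order $\|v_0\|_{\dot B^{-1/2}_{2,1}}$.
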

  The rest of the paper is structured as follows. 
  Our  global well-posedness result for critical regularity data close to a stable equilibrium is proved in Section \ref{s:GWP}. 
Then, optimal decay estimates for the constructed solutions are established in  Section \ref{s:decay}. In the final section, 
 we  prove  that, after performing  the rescaling \eqref{eq:diffrescaling}, the density converges to a solution 
of \eqref{eq:limit1}.  Explicit formulae for the solutions to the linearized compressible Navier-Stokes equations are recalled in Appendix, 
as well as some heuristics on the the high viscosity limit.



\section{The proof of global well-posedness}\label{s:GWP}

In order to reduce the proof to  the case 
\begin{equation}\label{eq:reduction}
\bar\eta=1,\quad Q'(1)=-1\andf  \nu(1)=1,
\end{equation} 
we first 
make the change of unknowns  $\eta=\bar\eta\wt\eta$ and $v=\bar\eta\wt v$  in \eqref{eq:NSClag} 
so that $(\wt\eta,\wt v)$ satisfies 
 \begin{equation}\label{eq:NL2}\left\{\begin{array}{l}
  \wt\eta_t - \wt v_y =0,\\[5pt]\wt v_t + {\rm Ma}^{-2} \check Q_y - \bar \nu ( \check\nu \wt v_y)_y=0\end{array}\right.
\end{equation}
with $\check Q$ and $\check\nu$ defined in \eqref{eq:renormalized}. We note  that $\check Q'(1)=-1$ and $\check\nu(1)=1$.
\medbreak
Then, we set
\begin{equation}\label{eq:change}\begin{aligned}
\wt\eta(t,y)=\check \eta(Tt,Yy)\andf \wt v(t,y)=V \check v (Tt,Yy)\qquad\cr\with 
T:={\rm Ma}^{-2}\bar\nu^{-1},\quad Y:={\rm Ma}^{-1}\bar\nu^{-1}\andf V: ={\rm Ma}^{-1}.\end{aligned}\end{equation} 
In this way, we see that $(\check\eta,\check v)$ satisfies \eqref{eq:NSClag} with 
reference specific volume equal to one,  and pressure and viscosity functions 
 satisfying \eqref{eq:reduction}. The threshold between low and high frequencies is shifted from $\alpha=\check\nu^{-1}$ to  $\alpha=1,$
and  the scaling properties of homogeneous Besov norms 
(see e.g. \cite[Chap. 2]{BCD})
guarantee that  we have for all $\sigma\in\R$ and $p\in[1,\infty],$ 
\begin{equation}\label{eq:rescaling}\begin{aligned}
\|\check\eta(Tt)\|_{\dot B^{1/p+\sigma}_{p,1}}=Y^{-\sigma} \|\wt\eta(t)\|_{\dot B^{1/p+\sigma}_{p,1}},
\quad\int_0^{Tt} \|\check\eta\|_{\dot B^{1/p+\sigma}_{p,1}}=Y^{-\sigma}T \int_0^{t}   \|\wt\eta\|_{\dot B^{1/p+\sigma}_{p,1}}
\\
\|\check v(Tt)\|_{\dot B^{1/p+\sigma}_{p,1}}=V^{-1}Y^{-\sigma} \|\wt v(t)\|_{\dot B^{1/p+\sigma}_{p,1}},
\quad
\int_0^{Tt} \|\check v\|_{\dot B^{1/p+\sigma}_{p,1}}=V^{-1}Y^{-\sigma}T \int_0^{t}   \|\wt v\|_{\dot B^{1/p+\sigma}_{p,1}}\\\andf
\int_0^{Tt} \|\check v_t\|_{\dot B^{1/p+\sigma}_{p,1}}=V^{-1}Y^{-\sigma} \int_0^{t}   \|\wt v_t\|_{\dot B^{1/p+\sigma}_{p,1}}.\qquad\qquad
\end{aligned}\end{equation}
In the rest of this section,  we drop the checks. Then, setting 
$a:=\eta-1,$   Equations  \eqref{eq:NSClag} rewrite
 \begin{equation}\label{eq:NSClaglin}\left\{
 \begin{array}{l}
  a_t - v_y =0,\\[5pt]
v_t -  a_y -v_{yy}=g:= (a K(a))_y+(L(a) v_y)_y
 \end{array}\right.\end{equation}
 for some  smooth functions $K$ and $L$ vanishing at $0,$ that may be computed from $\check Q$ and $\check\nu.$ For notational simplicity, the parameter $\alpha$ (now equal to $1$) coming into play  in 
 \eqref{eq:not1} and \eqref{eq:not2} will be omitted. 

\subsection{A priori estimates}

Here we  prove a priori estimates for \eqref{eq:NSClaglin}  seen as a linear system.
\medbreak
For $p\in[1,\infty],$ $(s,s')\in\R^2$ and $t\in\R_+,$  let us set, using the notation defined in \eqref{eq:tilde}, 
\begin{equation*}
\begin{aligned}
\cX_p^{s,s'}(t)&:=\|(a,v)\|_{\wt L_t^\infty(\dot B^{s}_{2,1})}^\ell\! +\!  \|(a_y,v)\|_{\wt L^\infty_t(\dot B^{s'}_{p,1})}^h
\!+\!\int_0^t\! \bigl(\|(a,v)\|_{\dot B^{s+2}_{2,1}}^\ell \!+\!\|(a_y,v_{yy},v_t)\|_{\dot B^{s'}_{p,1}}^h\bigr)\\\andf
\cX_{p,0}^{s,s'}&:= \|(a_0,v_0)\|_{\dot B^{s}_{2,1}}^\ell +  \|(a_{0,y},v_0)\|_{\dot B^{s'}_{p,1}}^h.\hspace{6cm}\end{aligned}
\end{equation*}
We claim that for any regularity indices $s$ and $s',$ Lebesgue exponent $p\in[1,\infty]$ and time $t\in\R_+,$  we have
\begin{equation}\label{eq:linear}
\cX^{s,s'}_p(t)\lesssim \cX_{p,0}^{s,s'}+\int_0^t\bigl(\|g\|^\ell_{\dot B^s_{2,1}}+\|g\|^h_{\dot B^{s'}_{p,1}}\bigr)\cdotp
\end{equation}
To prove our claim, we localize \eqref{eq:NSClaglin}  by means of $\ddj,$ getting 
 \begin{equation}\label{eq:NSClagj}\left\{ \begin{array}{l}
  a_{j,t} - v_{j,y} =0,\\[5pt]
v_{j,t} - a_{j,y} -v_{j,yy}=g_j, \end{array}\right.\end{equation}
with    $ a_j:=\ddj a,$  $v_j:=\ddj v$  and $g_j:=\ddj g.$

\subsubsection*{Step 1: the low frequencies}

As in \cite{D1}, we look at the evolution of the following functional where $\kappa>0$ will be fixed later: 
 $$\cL_j^2:=\|(a_j, v_j)\|^2_{L^2}-2\kappa\int_\R v_j a_{j,y}.$$
From \eqref{eq:NSClagj}, it is easy to get for all $j\in\Z,$ 
\begin{equation}\label{eq:lf1}
\frac12\frac d{dt}\cL_j^2+\kappa\|a_{j,y}\|_{L^2}^2+(1-\kappa) \|v_{j,y}\|_{L^2}^2=-\kappa 
\int_\R v_{j,yy} a_{j,y} -\kappa \int_\R g_j a_{j,y} + \int_\R g_j v_j.
\end{equation}
Using the spectral localization of $v_j$ and Young inequality, we may write
for some universal constant $C_B,$
\begin{align}\label{eq:lf2}
\biggl|\int_\R  v_{j,yy} \,a_{j,y}\biggr|&\leq  \frac{1}{2} \| a_{j,y}\|_{L^2}^2+ C_B2^{2j} \|v_{j,y}\|_{L^2}^2\\
\andf\label{eq:lf3}\int_\R g_j  a_{j,y} &\leq  C_B2^{j} \|g_j\|_{L^2}\|a_j\|_{L^2}.\end{align}
Hence, plugging \eqref{eq:lf2} and \eqref{eq:lf3} in \eqref{eq:lf1}, we get
\begin{multline}\label{eq:lf4}
\frac12\frac d{dt}\cL_j^2+\frac\kappa2\|a_{j,y}\|_{L^2}^2+(1-\kappa) \|v_{j,y}\|_{L^2}^2\\
\leq C_B2^{2j}\kappa \|v_{j,y}\|_{L^2}^2 + C_B2^j\kappa \|g_j\|_{L^2}\|a_j\|_{L^2}+  \|g_j\|_{L^2}\|v_j\|_{L^2}.
\end{multline}
Note that by Young  inequality and spectral localization,  we have 
$$
\biggl|2\kappa \int v_j a_{j,y}\biggr| \leq  \frac12\|v_j\|_{L^2}^2 +2 C_B^2\kappa^22^{2j}\|a_j\|_{L^2}^2
$$
which in particular ensures 
\begin{equation}\label{eq:equiv}\cL_j\simeq \|(a_j,v_j)\|_{L^2}\end{equation}
 whenever $j\leq j_0$ with $j_0\in\Z$ satisfying
\begin{equation}\label{eq:condj1}
2C_B2^{j_0}\kappa \leq1.
\end{equation}
If, furthermore, 
\begin{equation}\label{eq:condj2}\kappa\biggl(\frac32+C_B2^{2j_0}\biggr) \leq1\end{equation}
then 
$$1-\kappa-C_B2^{2j}\kappa\geq \kappa/2\quad\hbox{for all }\ j\leq j_0.$$
Therefore,  if  both \eqref{eq:condj1} and \eqref{eq:condj2} are satisfied then, reverting to \eqref{eq:lf4} allows to conclude that
there exist two universal positive constants $c_B<1<C_B$ such that  
\begin{equation}\label{eq:lffinal}
\frac12\frac d{dt}\cL_j^2 + c_B2^{2j}\cL_j^2 \leq C_B \|g_j\|_{L^2}\cL_j.
\end{equation}
Hence, integrating, we get by standard arguments: 
$$\cL_j(t)+c_B2^{2j}\int_0^t\cL_j\leq \cL_j(0)+ C_B\int_0^t \|g_j\|_{L^2}.$$
Multiplying the above inequality by $2^{js},$  using \eqref{eq:equiv}, then summing on $j\leq j_0,$ we get for any $s\in\R$ and $t\geq0$
(changing slightly $c_B$ and $C_B$ if necessary):
\begin{equation}\label{eq:lf}
\|(a,v)\|_{\wt L^\infty_t(\dot B^s_{2,1})}^\ell+c_B\int_0^t\|(a, v)\|_{\dot B^{s+2}_{2,1}}^\ell\leq C_B\biggl(\|(a_0,v_0)\|_{\dot B^s_{2,1}}^\ell
+\int_0^t\|g\|_{\dot B^{s}_{2,1}}^\ell\biggr)\cdotp
\end{equation}

\subsubsection*{Step 2: High frequencies}

Adapting to the one-dimensional case  the approach of  Haspot in \cite{Haspot11} (borrowed 
from earlier works by Hoff \cite{Hoff87,Hoff98}), we introduce  the  `effective velocity':
\begin{equation}\label{eq:hf1}
w(t,y):=v(t,y)+\int_0^y a(t,z)\,dz.
\end{equation}
From \eqref{eq:NSClaglin}, we discover that
\begin{equation}\label{eq:hf2} w_t- w_{yy}=g+v
\end{equation}
and it is obvious that 
\begin{equation} \label{eq:hf3} a_t +a = w_y.\end{equation}
On the one hand, leveraging the endpoint parabolic maximal regularity  estimates (see e.g. 
\cite[Chap. 3]{BCD})  to handle the high frequencies of $w$  and the standard Bernstein inequalities
(see \cite[Lemma 2.1]{BCD}) which imply  that 
$$\|v\|^h_{\dot B^{s'}_{p,1}}\lesssim 2^{-2j_0}\|v\|^h_{\dot B^{s'+2}_{p,1}},$$
and the fact that $v_{yy}=w_{yy}-a_y,$ we get for some universal constant $C_0$ and all $t\geq0$:
\begin{align*}
\|w\|^h_{\wt L^\infty_t(\dot B^{s'}_{p,1})}+ \int_0^t \|w\|^h_{\dot B^{s'+2}_{p,1}} &\leq 
C_0\biggl(\|w_0\|^h_{\dot B^{s'}_{p,1}}+\int_0^t \|g\|_{\dot B^{s'}_{p,1}}^h
+ \int_0^t \|v\|^h_{\dot B^{s'}_{p,1}}\biggr)\\
 &\leq C_0\biggl(\|w_0\|^h_{\dot B^{s'}_{p,1}}+\int_0^t \|g\|_{\dot B^{s'}_{p,1}}^h
+ 2^{-2j_0}\int_0^t \|v\|^h_{\dot B^{s'+2}_{p,1}}\biggr)
\\ &\leq C_0\biggl(\|w_0\|^h_{\dot B^{s'}_{p,1}}+\int_0^t \|g\|_{\dot B^{s'}_{p,1}}^h
+ 2^{-2j_0}\int_0^t \bigl( \|w\|^h_{\dot B^{s'+2}_{p,1}}\!+\!\|a\|^h_{\dot B^{s'+1}_{p,1}}\bigr)\biggr)\cdotp
\end{align*}
On the other hand, from \eqref{eq:hf3}, we immediately have  for all $t\geq0,$
$$\|a\|^h_{\wt L^\infty_t(\dot B^{s'+1}_{p,1})}+ \int_0^t\|a\|^h_{\dot B^{s'+1}_{p,1}}
\leq \|a_0\|^h_{\dot B^{s'+1}_{p,1}}+\int_0^t\|w_y\|^h_{\dot B^{s'+1}_{p,1}}.$$
Putting together  the above inequalities and assuming that $j_0$ is large enough 
(to have $2^{-2j_0}$ small enough), we conclude that there exists an absolute constant $C_0$ such that for all $t\geq0,$
\begin{multline}\label{eq:wh}
\|a\|^h_{\wt L^\infty_t(\dot B^{s'+1}_{p,1})}+\|w(t)\|^h_{\wt L^\infty_t(\dot B^{s'}_{p,1})}+  \int_0^t \bigl(\|w\|^h_{\dot B^{s'+2}_{p,1}}+ \|a\|^h_{\dot B^{s'+1}_{p,1}}\bigr)
\\\leq C_0\biggl(\|a_0\|^h_{\dot B^{s'+1}_{p,1}}+\|w_0\|^h_{\dot B^{s'}_{p,1}}+\int_0^t\|g\|^h_{\dot B^{s'}_{p,1}}\biggr)\cdotp
\end{multline}
Owing to \eqref{eq:hf1}, we have 
\begin{equation}\|v- w\|_{\dot B^{s'}_{p,1}}^h\lesssim 2^{-2j_0}\|a\|_{\dot B^{s'+1}_{p,1}}^h
\andf
\|v- w\|_{\dot B^{s'+2}_{p,1}}^h\lesssim \|a\|_{\dot B^{s'+1}_{p,1}}^h.\end{equation}
Hence, $w$ may be replaced with $v$ in \eqref{eq:wh}.  Furthermore, since $v_t=w_{yy}+g,$ 
we can bound $v_t$ in $L^1(0,t;\dot B^{s'+2}_{p,1})$ like $w.$ 
Then, combining with \eqref{eq:lf} completes the proof of  \eqref{eq:linear}.

\subsubsection*{Step 3: Nonlinear estimates}

Let us introduce the notation
\begin{align*} 
\cX_p(t)&:=\!\|(a,v)\|_{\wt L^\infty_t(\dot B^{-1/2}_{2,1})}^{\ell} \!+\!  \|(a_y,v)\|_{\wt L^\infty_t(\dot B^{-1+1/p}_{p,1})}^{h}
\!+\!\int_0^t\!\bigl( \|(a,v)\|_{\dot B^{3/2}_{2,1}}^{\ell} +\|(a_y,v_{yy},v_t)\|_{\dot B^{-1+1/p}_{p,1}}^{h}\bigr)\\
\hbox{and }\ \cX_{p,0}&:=\|(a_0,v_0)\|_{\dot B^{-1/2}_{2,1}}^{\ell} + \|(a_{0,y},v_0)\|_{\dot B^{-1+1/p}_{p,1}}^{h}.
\end{align*}
Taking $s=-1/2$ and $s'=1/p-1$ in \eqref{eq:linear}, we discover that 
\begin{equation}\label{eq:NL1}
\cX_p(t) \lesssim \cX_{p,0}+\int_0^t \|g\|_{\dot B^{-1/2}_{2,1}}^\ell  +\int_0^t  \|g\|_{\dot B^{-1/p+1}_{p,1}}^h.
\end{equation} 
To estimate $g$ (defined in \eqref{eq:NSClaglin}),  we have to remember that  $a^\ell$ and $a^h$ are expected to be small in 
$L^\infty(\R_+;\dot B^{-1/2}_{2,1})$ and $L^\infty(\R_+;\dot B^{1/p}_{p,1}),$ respectively.
Since  all spaces $\dot B^{1/q}_{q,1}$ are continuously embedded in the set~$\cC_b$ of bounded continuous
functions on $\R,$ and as  
\begin{equation}\label{eq:lfhf}
 \|z\|^\ell_{\dot B^{s'}_{2,1}}\lesssim  \|z\|^\ell_{\dot B^{s}_{2,1}}\andf  \|z\|^h_{\dot B^{s}_{p,1}}\lesssim  \|z\|^h_{\dot B^{s'}_{p,1}}
 \quad\hbox{whenever }\  s\leq s', \end{equation}
  the classical results of stability of Besov spaces by left composition give us
\begin{equation}\label{eq:compo}
\|F(a)\|_{\dot B^{\sigma}_{q,1}}\lesssim \|a\|_{\dot B^{\sigma}_{q,1}},\qquad \sigma>0\andf q\in[1,\infty]
\end{equation}
for all smooth function $F$ vanishing at $0.$ 
\medbreak
In  the case $p=2,$  it is only a matter of bounding $g$ in $L^1(\R_+;\dot B^{-1/2}_{2,1}).$ 
To do so, we use \eqref{eq:compo} 
and the fact that  $\partial_y:\dot B^{1/2}_{2,1}\to \dot B^{-1/2}_{2,1}.$ We get
$$\begin{aligned}
\|g\|_{\dot B^{-1/2}_{2,1}}&\lesssim \| aK(a)\|_{\dot B^{1/2}_{2,1}} + \|L(a)v_y\|_{\dot B^{1/2}_{2,1}} \\
&\lesssim \| a\|_{\dot B^{1/2}_{2,1}}^2 + \|a\|_{\dot B^{1/2}_{2,1}}\|v_y\|_{\dot B^{1/2}_{2,1}}.
\end{aligned}
$$
Hence,  since 
$$ \| a\|_{L^2_t(\dot B^{1/2}_{2,1})}^2\lesssim  \| a\|_{L^1_t(\dot B^{3/2}_{2,1})}^\ell \| a\|_{L^\infty_t(\dot B^{-1/2}_{2,1})}^\ell
 + \| a\|_{L^1_t(\dot B^{1/2}_{2,1})}^h \| a\|_{L^\infty_t(\dot B^{1/2}_{2,1})}^h,$$
we end up with $$\int_0^t\|g\|_{\dot B^{-1/2}_{2,1}}\lesssim 
\| a\|_{L^2 _t(\dot B^{1/2}_{2,1})}^2 + \|a\|_{L^\infty_t(\dot B^{1/2}_{2,1})}
\|v_y\|_{L^1_t(\dot B^{1/2}_{2,1})}\lesssim  X_2^2(t),$$
whence
$$\cX_2(t) \leq C\bigl(\cX_2(0)+ \cX_2^2(t)\bigr)\cdotp$$ 
We thus get the desired global-in-time control for small data in the particular case $p=2.$
\smallbreak
To handle the general case $p\in[2,4],$ we have to establish quadratic estimates 
for  the last two terms of \eqref{eq:NL1}.  Toward this, we shall use repeatedly the fact that as a consequence of interpolation
and H\"older inequality, we have for all $r\in[1,\infty],$ 
\begin{equation}\label{eq:interpo1}
 \|(a,v)\|^\ell_{L^r_t(\dot B^{-1/2+2/r}_{2,1})}+\|a\|^h_{L^r_t(\dot B^{1/p}_{p,1})}
 +\|v\|^h_{L^r_t(\dot B^{-1+1/p+2/r}_{p,1})} \lesssim   \cX_p(t)\quad\hbox{for all }\   t>0.\end{equation}
 Now, using  the stability by product and left-composition of the space $\dot B^{1/p}_{p,1}$  and the fact that
\begin{equation}\label{eq:embed}
\|z\|_{\dot B^{1/q+\sigma}_{q,1}}\lesssim \|z\|_{\dot B^{1/2+\sigma}_{2,1}}\quad\hbox{for all }\ q\geq2\andf\sigma\in\R,\end{equation}
we get
$$\begin{aligned}\int_0^t\|(aK(a))_y\|^h_{\dot B^{-1+1/p}_{p,1}}&\lesssim \int_0^t\|aK(a)\|_{\dot B^{1/p}_{p,1}}\\
&\lesssim \int_0^t\|a\|_{\dot B^{1/p}_{p,1}}^2\\&\lesssim 
\int_0^t\Bigl(\|a\|_{\dot B^{1/2}_{2,1}}^\ell+\|a\|_{\dot B^{1/p}_{p,1}}^h\Bigr)^2\lesssim \cX_p^2(t).\end{aligned}$$
Similarly, 
$$\begin{aligned}
\int_0^t\|(L(a)v_y)_y\|^h_{\dot B^{-1+1/p}_{p,1}}&\lesssim \int_0^t\|L(a)v_y\|_{\dot B^{1/p}_{p,1}}\\
&\lesssim \int_0^t\|a\|_{\dot B^{1/p}_{p,1}}\|v\|_{\dot B^{1/p}_{p,1}}\\&\lesssim 
\|a\|_{L_t^2(\dot B^{1/p}_{p,1})}\|v\|_{L^2_t(\dot B^{1/p}_{p,1})}
\lesssim \cX_p^2(t).\end{aligned}$$
Handling the low frequencies is more involved, and requires $p$ to be in $[2,4].$  Among other things, we shall use the fact that for any smooth function 
$F$ vanishing at $0,$ the function 
 $F(a)$  can be bounded in $L^{8/3}(0,t;L^4)$ in terms of $\cX_4(t)$ (and thus of $\cX_p(t)$ if $p\leq4$). Indeed, leveraging the mean value formula and
 the fact that $\|a\|_{L^\infty}$ is small, we discover that
$$
\|F(a)\|_{L^4}\lesssim \|a\|_{L^4}$$
and thus, owing to \eqref{eq:interpo1}, the embedding $\dot B^{1/4}_{2,1}\hookrightarrow L^4$
and to $\|z^h\|_{L^4}\lesssim \|z\|^h_{\dot B^\sigma_{4,1}}$ for any $\sigma\geq0,$ 
\begin{equation}\label{eq:L8L4}
\|F(a)\|_{L^{8/3}_t(L^4)} \lesssim \|a\|_{L^{8/3}_t(L^4)} \lesssim 
 \|a\|_{L^{8/3}_t(\dot B^{1/4}_{2,1})}^\ell+  \|a\|_{L^{8/3}_t(\dot B^{1/4}_{4,1})}^h
\lesssim  \cX_4(t).
\end{equation}
Let us first bound  $\|(L(a)v_y)_y\|_{L^1_t(\dot B^{-1/2}_{2,1})}^\ell.$ To do so, we use Bony's decomposition:
$$ L(a)v_y= T_{L(a)} v_y +R(L(a), v_y)+ T_{v_y}L(a),$$
where $T$ and $R$ stand for the paraproduct and remainder operators defined  in \cite[Def. 2.45]{BCD}. 
In the one dimensional case, it is known that\footnote{The third result is stated in   \cite[Thm 2.52]{BCD}. 
The first two results are a slight modification of \cite[Thm 2.47]{BCD}:  we just have to change 
$\|\dot S_{j-1}u\|_{L^\infty}$ in $\|\dot S_{j-1} u\|_{L^4}$ in the proof therein.}
\begin{itemize} 
\item $T: L^4\times\dot B^\sigma_{4,1}\to \dot B^\sigma_{2,1}$ for any $\sigma\leq1/2$;
\item $T: \dot B^{\sigma'}_{4,1}\times \dot B^\sigma_{4,1}\to \dot B^{\sigma+\sigma'}_{2,1}$ for any $\sigma\in\R$ and $\sigma'\in\R_-$
with $\sigma+\sigma'\leq 1/2$;
\item $R:\dot B^{\sigma'}_{4,1}\times \dot B^\sigma_{4,1}\to \dot B^{\sigma+\sigma'}_{2,1}$ for any $(\sigma,\sigma')\in\R^2$ 
with $0<\sigma+\sigma'\leq1/2.$
\end{itemize}
Therefore, we have   
\begin{align*}\|T_{L(a)}v_y\|_{\dot B^{-1/2}_{2,1}}^\ell
 &\lesssim \|L(a)\|_{L^4} \|v_y\|_{\dot B^{-1/2}_{4,1}}
\lesssim \|a\|_{L^4}\bigl(\|v_y\|_{\dot B^{-1/4}_{2,1}}^\ell + \|v_y\|_{\dot B^{-1/2}_{4,1}}^h\bigr),\\
\|R(L(a),v_y)\|_{\dot B^{1/2}_{2,1}}^\ell
 &\lesssim \|L(a)\|_{\dot B^{1/4}_{4,1}} \|v_y\|_{\dot B^{1/4}_{4,1}}
\lesssim \|a\|_{\dot B^{1/4}_{4,1}} \|v_y\|_{\dot B^{1/4}_{4,1}},\\
\|T_{v_y}L(a)\|_{\dot B^{-1/2}_{2,1}}^\ell
 &\lesssim\|v_y\|_{\dot B^{-3/4}_{4,1}} \|L(a)\|_{\dot B^{1/4}_{4,1}} 
\lesssim \bigl(\|v_y\|_{\dot B^{-1/2}_{2,1}}^\ell + \|v_y\|_{\dot B^{-3/4}_{4,1}}^h\bigr)\|a\|_{\dot B^{1/4}_{4,1}}.\end{align*}
Hence, using also H\"older inequality with respect to time, and the embedding
$$
\dot B^\sigma_{2,1}\hookrightarrow \dot B^{\sigma-1/2+1/p}_{p,1}\hookrightarrow \dot B^{\sigma-1/4}_{4,1},\quad
2\leq p\leq 4,\quad \sigma\in\R,$$
we obtain
\begin{align*}\|T_{L(a)}v_y\|_{L^1_t(\dot B^{-1/2}_{2,1})}^\ell
&\lesssim \|a\|_{L^{8/3}_t(L^4)}\bigl(\|v_y\|_{L^{8/5}_t(\dot B^{-1/4}_{2,1})}^\ell + \|v_y\|_{L^{8/5}_t(\dot B^{-1/2}_{4,1})}^h\bigr),\\
\|R(L(a),v_y)\|_{L^1_t(\dot B^{1/2}_{2,1})}^\ell
&\lesssim \|a\|_{L^\infty_t(\dot B^{1/4}_{4,1})} \|v_y\|_{L^1_t(\dot B^{1/4}_{4,1})},\\\|T_{v_y}L(a)\|_{L^1_t(\dot B^{-1/2}_{2,1})}^\ell 
&\lesssim \bigl(\|v_y\|_{L^2_t(\dot B^{-1/2}_{2,1})}^\ell + \|v_y\|_{L^2_t(\dot B^{-3/4}_{4,1})}^h\bigr)\|a\|_{L^2_t(\dot B^{1/4}_{4,1})}.
\end{align*}
Remembering \eqref{eq:interpo1} and \eqref{eq:lfhf}, we conclude that 
\begin{equation}
\|(L(a)v_y)_y\|_{L^1_t(\dot B^{-1/2}_{2,1})}^\ell\lesssim \cX_4^2(t).
\end{equation}
For bounding  $\|(aK(a))_y\|_{L^1_t(\dot B^{-1/2}_{2,1})}^\ell,$ we again  resort to Bony's decomposition: 
$$a K(a)=T_a K(a)+R(a,K(a)) +T_{K(a)} a.$$ 
The remainder term is easy to handle: according to  \cite[Thm 2.52]{BCD}, we have
$$\begin{aligned}\| (R(a,K(a)))_y\|_{L^1_t(\dot B^{-1/2}_{2,1})}^\ell&\lesssim \|R(a,K(a))\|_{L^1_t(\dot B^{1/2}_{2,1})}\\
&\lesssim \|a\|_{L^2_t(\dot B^{1/4}_{4,1})}\|K(a)\|_{L^2_t(\dot B^{1/4}_{4,1})}\\&\lesssim  \|a\|_{L^2_t(\dot B^{1/4}_{4,1})}^2.
\end{aligned}$$
From Inequalities  \eqref{eq:lfhf},  \eqref{eq:embed} and the aforementioned results of continuity for $T,$ we get 
$$\|T_{K(a)} a^\ell\|_{\dot B^{1/2}_{2,1}}^\ell\lesssim  \|K(a)\|_{L^4} \|a^\ell\|_{\dot B^{1/2}_{4,1}}
\lesssim \|a\|_{L^4} \|a\|_{\dot B^{3/4}_{2,1}}^\ell,$$
$$\|T_{K(a)} a^h\|_{\dot B^{1/2}_{2,1}}^\ell\lesssim\|T_{K(a)} a^h\|_{\dot B^{1/4}_{2,1}}\lesssim \|K(a)\|_{L^4} \|a^h\|_{\dot B^{1/4}_{4,1}}
\lesssim \|a\|_{L^4} \|a\|^h_{\dot B^{1/4}_{4,1}}.$$
Therefore, we have
$$\begin{aligned}\|T_{K(a)} a^\ell\|_{L_t^1(\dot B^{1/2}_{2,1})}^\ell&\lesssim
 \|a\|_{L^{8/3}_t(L^4)} \|a\|_{L^{8/5}_t(\dot B^{3/4}_{2,1})}^\ell\lesssim \cX_4^2(t),\\
\|T_{K(a)} a^h\|_{L_t^1(\dot B^{1/2}_{2,1})}^\ell
&\lesssim  \|a\|_{L^{8/3}_t(L^4)} \|a\|_{L^{8/5}_t(\dot B^{1/4}_{4,1})}^h\lesssim \cX_4^2(t).\end{aligned}$$
Finally, to estimate the term $T_a K(a),$ we use the fact that 
$$K(a)=K(a^\ell) + K^h(a) \with K^h(a):= a^h\int_0^1 K'(a^\ell +\tau a^h)\,d\tau.$$
As $\dot B^{1/2}_{2,1}$ and $\dot B^{1/4}_{4,1}$ are stable by product and  left composition, and $\|a\|_{L^\infty}$ is small, we have 
$$\|K(a^\ell)\|_{\dot B^{1/2}_{4,1}}\lesssim \|a^\ell\|_{\dot B^{1/2}_{4,1}}\andf 
\|K^h(a)\|_{\dot B^{1/4}_{4,1}}\lesssim \|a\|_{\dot B^{1/4}_{4,1}}^h.$$ 
Hence the terms  $T_a K(a^\ell)$ and $T_a K^h(a)$ may be bounded like $T_{K(a)} a^\ell$ and 
$T_{K(a)} a^h,$ respectively. 
\smallbreak
In the end, reverting to \eqref{eq:NL1}, we get 
\begin{equation}\label{eq:quadratic}\cX_p(t)\leq C\bigl(\cX_{p,0}+ \cX_p^2(t)\bigr),\end{equation}
which, if $\cX_{p,0}$ is small enough, leads to  
\begin{equation}\label{eq:global}\cX_p(t)\leq 2C\cX_{p,0}.\end{equation}
Using the scaling properties of Besov spaces pointed out in \eqref{eq:rescaling},
one can now get 
 Inequality~\eqref{eq:final} in the general case.

\subsection{Stability with respect to the data, and uniqueness} 

In this subsection, we prove the following result which, obviously, implies the uniqueness part of Theorems  \ref{thm:main0} and \ref{thm:main1}.
\begin{prop} \label{p:uniqueness}
 Let $(a^1,v^1)$ and $(a^2,v^2)$ be two solutions of  Equations \eqref{eq:NSClaglin} on $[0,T]\times \R.$
 Assume that there exists  $p\in[1,\infty)$ such that  for $i=1,2,$  we have
 $$
 a^i\in\cC([0,T];\dot B^{1/p}_{p,1})\andf  v^i\in \cC([0,T];\dot B^{-1+1/p}_{p,1})\cap L^1(0,T;\dot B^{1+1/p}_{p,1}).$$
 There exist two  constants $c$ and $C$ depending only on $p$ and on the functions $K$ and $L,$ and a constant $C_{T}$ 
 depending also on $\|(a^1,a^2)\|_{L^\infty(0,T\times\R)}$  such that if
 \begin{equation}\label{eq:smalluniq}
 \sup_{t\in[0,T]} \|a^2(t)\|_{\dot B^{1/p}_{p,1}}\leq c,\end{equation}
then the functions $\da:=a^2-a^1$ and $\dv:=v^2-v^1$ satisfy  for all $t\in[0,T]$:
\begin{multline}\label{eq:uniqueness}
\|\da(t)\|_{\dot B^{1/p}_{p,1}}+\|\dv(t)\|_{\dot B^{-1+1/p}_{p,1}}+\int_0^t\|\dv\|_{\dot B^{1+1/p}_{p,1}}
\\\leq C\bigl(\|\da(0)\|_{\dot B^{1/p}_{p,1}}+\|\dv(0)\|_{\dot B^{-1+1/p}_{p,1}}\bigr)\exp\biggl(C_{T}\int_0^t\Bigl(\|v^1_y\|_{\dot B^{1/p}_{p,1}}
+1+\|a^1\|_{\dot B^{1/p}_{p,1}}^2\Bigr)\biggr)\cdotp\end{multline}
\end{prop}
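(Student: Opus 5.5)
The plan is to write the system satisfied by $(\da,\dv)$ and estimate it in the scaling-invariant $L^p$ framework. We use a Lagrangian-type reformulation that puts the nonlinearity of $a^2$ in front of the second-order operator. Subtracting the two copies of \eqref{eq:NSClaglin} gives
\begin{equation*}
\da_t-\dv_y=0,\qquad \dv_t-\bigl((1+L(a^2))\dv_y\bigr)_y=\bigl(a^2K(a^2)-a^1K(a^1)\bigr)_y+\da_y+\bigl((L(a^2)-L(a^1))v^1_y\bigr)_y.
\end{equation*}
We write $H(a):=aK(a)$, so that $H(a^2)-H(a^1)=\da\int_0^1H'(a^1+\tau\da)\,d\tau$, and similarly for $L$. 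The velocity equation is then a parabolic equation with variable coefficient $\nu_2:=1+L(a^2)$. Under \eqref{eq:smalluniq}, and since $\dot B^{1/p}_{p,1}\hookrightarrow\cC_b$, this coefficient is close to $1$ in $\dot B^{1/p}_{p,1}\cap L^\infty$.

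\emph{Step 1: estimate for $\dv$.} We localize with $\ddj$, write $\nu_2=1+L(a^2)$, and treat $(L(a^2)\dv_y)_y$ as a source. The endpoint maximal regularity used in Step 2 of the a priori estimates (now with no low/high splitting, for all frequencies in $\dot B^{-1+1/p}_{p,1}$) gives
\begin{equation*}
\|\dv\|_{\wt L^\infty_t(\dot B^{-1+1/p}_{p,1})}+\int_0^t\|\dv\|_{\dot B^{1+1/p}_{p,1}}\lesssim \|\dv(0)\|_{\dot B^{-1+1/p}_{p,1}}+\int_0^t\|\text{RHS}\|_{\dot B^{-1+1/p}_{p,1}}.
\end{equation*}
The term $\|L(a^2)\dv_y\|_{\dot B^{1/p}_{p,1}}\lesssim\|a^2\|_{\dot B^{1/p}_{p,1}}\|\dv\|_{\dot B^{1+1/p}_{p,1}}$ is absorbed thanks to the smallness $c$; this is exactly where \eqref{eq:smalluniq} enters.

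The remaining sources are estimated in $\dot B^{1/p}_{p,1}$, using the algebra property and composition \eqref{eq:compo}, with constants depending on $\|(a^1,a^2)\|_{L^\infty}$:
\begin{equation*}
\|H(a^2)-H(a^1)\|_{\dot B^{1/p}_{p,1}}\le C_T\bigl(1+\|(a^1,a^2)\|_{\dot B^{1/p}_{p,1}}\bigr)\|\da\|_{\dot B^{1/p}_{p,1}},
\end{equation*}
and likewise $\|(L(a^2)-L(a^1))v^1_y\|_{\dot B^{1/p}_{p,1}}\le C_T\|\da\|_{\dot B^{1/p}_{p,1}}\|v^1_y\|_{\dot B^{1/p}_{p,1}}$. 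The linear term $\da_y$ contributes $\|\da\|_{\dot B^{1/p}_{p,1}}$. All three are thus of the form (integrable weight)$\times\|\da\|_{\dot B^{1/p}_{p,1}}$.

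\emph{Step 2: estimate for $\da$; this is the main obstacle.} Since $\da_t=\dv_y$, we have directly
\begin{equation*}
\|\da(t)\|_{\dot B^{1/p}_{p,1}}\le\|\da(0)\|_{\dot B^{1/p}_{p,1}}+\int_0^t\|\dv\|_{\dot B^{1+1/p}_{p,1}}.
\end{equation*}
Inserting the Step 1 bound naively leaves a term $C\int_0^t\|\da\|$ with a non-small constant, but that is harmless: it is exactly what Gronwall handles, producing the factor $\exp(C_T t)$, i.e.\ the "$1$" in the exponent. The genuinely delicate point is the loss coming from $\|a^2\|_{\dot B^{1/p}_{p,1}}\|\da\|$ in the pressure difference. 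If this route fails to close (for instance because of the factor $\|a^1\|^2$), we would instead use the effective velocity $\delta w=\dv+\int_0^y\da$ as in \eqref{eq:hf1}. This yields the damped equation $\da_t+\da=\delta w_y$, and we would absorb $\delta(aK(a))$ through $\int\|a^i\|\,\|\da\|$, which after Young's inequality gives the weight $1+\|a^1\|^2_{\dot B^{1/p}_{p,1}}$ appearing in \eqref{eq:uniqueness}.

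\emph{Step 3: conclusion.} Summing the two estimates, the quantity
\begin{equation*}
Y(t):=\|\da\|_{L^\infty_t(\dot B^{1/p}_{p,1})}+\|\dv\|_{L^\infty_t(\dot B^{-1+1/p}_{p,1})}+\int_0^t\|\dv\|_{\dot B^{1+1/p}_{p,1}}
\end{equation*}
satisfies
\begin{equation*}
Y(t)\le C\,Y(0)+C_T\int_0^t\Bigl(1+\|v^1_y\|_{\dot B^{1/p}_{p,1}}+\|a^1\|^2_{\dot B^{1/p}_{p,1}}\Bigr)Y.
\end{equation*}
Here $\|a^2\|$ is bounded by $\|a^1\|+\|\da\|$, and the quadratic part is handled by Young's inequality. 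Gronwall's lemma then yields \eqref{eq:uniqueness}.
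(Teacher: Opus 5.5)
Your proposal is correct and is essentially the paper's proof. You write the system for $(\da,\dv)$ and bound $\da$ directly from $\da_t=\dv_y$ by $\int_0^t\|\dv\|_{\dot B^{1+1/p}_{p,1}}$. You apply heat maximal regularity to $\dv$, absorbing $(L(a^2)\dv_y)_y$ thanks to \eqref{eq:smalluniq}. The remaining sources are bounded by (integrable weight)$\times\|\da\|_{\dot B^{1/p}_{p,1}}$ via product and composition laws, and Gronwall concludes.

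The effective-velocity fallback in your Step 2 and the device $\|a^2\|\le\|a^1\|+\|\da\|$ in Step 3 are unnecessary. The direct route already closes using $\|a^2\|_{\dot B^{1/p}_{p,1}}\le c$ and $1+\|a^1\|_{\dot B^{1/p}_{p,1}}\lesssim 1+\|a^1\|_{\dot B^{1/p}_{p,1}}^2$. One correction: your bound on $(L(a^2)-L(a^1))v^1_y$ needs an extra factor $1+\|(a^1,a^2)\|_{\dot B^{1/p}_{p,1}}$, which the paper's estimate of $\dh$ includes.
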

\begin{proof}
Let us set $\dK:=K(a^2)-K(a^1)$ and  $\dL:=L(a^2)-L(a^1).$ Since both solutions satisfy Equations \eqref{eq:NSClaglin}, the 
pair $(\da,\dv)$ solves:
 \begin{equation}\label{eq:NSCuniq}\left\{
 \begin{array}{l}
  \da_t = \dv_y,\\[5pt]
\dv_t -\dv_{yy}=\da_y+\dh_y
 \end{array}\right.\end{equation}
with $\dh:=a^1\dK+\da K(a^2)+\dL\,v_y^1+L(a^2)\dv_y.$
\smallbreak
It is completely obvious that for all $t\in[0,T],$ 
\begin{equation}\label{eq:da}\|\da(t)\|_{\dot B^{1/p}_{p,1}}\leq \|\da(0)\|_{\dot B^{1/p}_{p,1}}+\int_0^t\|\dv_y\|_{\dot B^{1/p}_{p,1}}
\end{equation}
and we have using parabolic maximal regularity,
\begin{equation}\label{eq:dv}\|\dv(t)\|_{\dot B^{-1+1/p}_{p,1}}+\int_0^t\|\dv\|_{\dot B^{1+1/p}_{p,1}}\lesssim 
\|\dv(0)\|_{\dot B^{-1+1/p}_{p,1}}+\int_0^t\|\dh\|_{\dot B^{1/p}_{p,1}}.\end{equation}
Now, basic product and composition estimates (see \cite[Chap. 2]{BCD}) guarantee us that 
$$\begin{aligned} \|\dh\|_{\dot B^{1/p}_{p,1}}&\leq C\Bigl(
\|a^1\|_{\dot B^{1/p}_{p,1}} \|\dK\|_{\dot B^{1/p}_{p,1}} \!+\!\|\da\|_{\dot B^{1/p}_{p,1}} \| K(a^2)\|_{\dot B^{1/p}_{p,1}}\! +\!
\|\dL\|_{\dot B^{1/p}_{p,1}} \|v_y^1\|_{\dot B^{1/p}_{p,1}} \\&\hspace{8cm}
+\!\|L(a^2)\|_{\dot B^{1/p}_{p,1}} \|\dv_y\|_{\dot B^{1/p}_{p,1}}\Bigr) \\
&\leq C_{T}\Bigl(\bigl(1\!+\!\|(a^1,a^2)\|_{\dot B^{1/p}_{p,1}}\bigr)
\|\da\|_{\dot B^{1/p}_{p,1}}\|(a^1,a^2,v^1_y)\|_{\dot B^{1/p}_{p,1}}+\|a^2\|_{\dot B^{1/p}_{p,1}}
\|\dv\|_{\dot B^{1+1/p}_{p,1}}\Bigr)\cdotp \end{aligned}$$
Combining Inequalities \eqref{eq:da} and \eqref{eq:dv} and assuming that $c$ in \eqref{eq:smalluniq} is small enough gives
\begin{multline}\label{eq:dadv}\|\da(t)\|_{\dot B^{1/p}_{p,1}}+\|\dv(t)\|_{\dot B^{-1+1/p}_{p,1}}+\int_0^t\|\dv\|_{\dot B^{1+1/p}_{p,1}}\leq C_{T}\Bigl(
\|\da(0)\|_{\dot B^{1/p}_{p,1}}+\|\dv(0)\|_{\dot B^{-1+1/p}_{p,1}}\\
+\int_0^t\bigl(1+\|a^1\|_{\dot B^{1/p}_{p,1}}^2+\|v^1_y\|_{\dot B^{1/p}_{p,1}}\bigr)\|\da\|_{\dot B^{1/p}_{p,1}}\Bigr)\end{multline}
and Inequality \eqref{eq:uniqueness} then stems from Gronwall lemma.
\end{proof}

\subsection{The proof of Theorem \ref{thm:main1}} 
The uniqueness being ensured by the above proposition, we give some hint on the proof of existence. 
It is achieved by means of  the classical  scheme consisting in solving  inductively a sequence of
 linear equations (corresponding to  \eqref{eq:NSClaglin}), proving all-time uniform estimates in the  desired solution space, then checking that the constructed sequence is a Cauchy one in a slightly larger functional space which,  
 nevertheless, contains enough regularity  to pass to the limit in the approximate equations.  
 
 \subsubsection*{Step 1. Construction of a sequence of approximate solutions}
 
 We define the first term $(a^0,v^0)$ of the sequence to be the solution of \eqref{eq:NSClaglin} with $g\equiv0$
 and initial data $(a_0,v_0).$ Then, once $(a^n,v^n)$ has been constructed we take $(a^{n+1},v^{n+1})$ to be the solution of
  \begin{equation}\label{eq:NSClaglinn}\left\{
 \begin{array}{l}
  a_t^{n+1} - v_y^{n+1} =0,\\[5pt]
v_t^{n+1} -  a_y^{n+1} -v_{yy}^{n+1}= (a^n K(a^n))_y+(L(a^n) v_y^n)_y
 \end{array}\right.\end{equation}
 with $(a^{n+1},v^{n+1})|_{t=0}=(a_0,v_0).$ 
\medbreak
 Note that all the terms of the sequence can be computed from the previous one by means of the variation of constant formula
 (everything can be made explicit on the Fourier space, see the Appendix). 
 In particular, one can show inductively that  the terms of the sequence are globally defined and belong to the desired space. 
 
 \subsubsection*{Step 2. Uniform estimates} 
 Starting from \eqref{eq:NL1}, one can reproduce faithfully the estimates of $g$ leading to \eqref{eq:quadratic}, getting eventually
 (with obvious notation):
 $$ \forall t\in\R_+,\; \cX_p^{n+1}(t)\leq C\Bigl(\cX_{p,0}+\bigl(\cX_p^n(t)\bigr)^2\Bigr)\cdotp$$
 This implies that if $\cX_{p,0}$ is small enough, then we have 
 \begin{equation}\label{eq:unifbound}  \forall t\in\R_+,\; \cX_p^{n}(t)\leq 2C\cX_{p,0}.\end{equation}
 
 \subsubsection*{Step 3. Convergence of the sequence}
We claim that  $(a^n,v^n)_{n\in\N}$ converges in the space
$$\cC(\R_+;\dot B^{1/p}_{p,1})\times\bigl(\cC(\R_+;\dot B^{-1+1/p}_{p,1})\cap L^1_{loc}(\R_+;\dot B^{1+1/p}_{p,1})\bigr)\cdotp$$
Indeed, it suffices to show that   $(a^n,v^n)_{n\in\N}$  is a Cauchy sequence in the space
\begin{equation}\label{eq:FT}
F_T:=\cC([0,T];\dot B^{1/p}_{p,1})\times\bigl(\cC([0,T];\dot B^{-1+1/p}_{p,1})\cap L^1(0,T;\dot B^{1+1/p}_{p,1})\bigr)\quad
\hbox{for all }\ T>0.\end{equation}
To do so, we just have to modify slightly  the proof of Proposition \ref{p:uniqueness}: let
 $\da^n:=a^{n+1}-a^n$ and $\dv^n:=v^{n+1}-v^n.$ Then, 
we observe that for all $n\in\N,$ we have  $\da^n|_{t=0}=\dv^n|_{t=0}=0$ and 
  \begin{equation*}\left\{
 \begin{array}{l} \da_t^{n} - \dv_y^{n} =0,\\[5pt]
\dv_t^{n} -  \da_y^{n} -\dv_{yy}^{n}= (a^n K(a^n)-a^{n-1}K(a^{n-1}))_y+\bigl(L(a^n) v_y^n-L(a^{n-1})v_y^{n-1}\bigr)_y.
 \end{array}\right.\end{equation*}
 Hence, remembering that, thanks to \eqref{eq:unifbound}, all terms $a^n$ are small in $L^\infty(\R_+;\dot B^{1/p}_{p,1})$
 and arguing exactly as for getting \eqref{eq:dadv}, we discover that
 \begin{multline*}\|\da^n(t)\|_{\dot B^{1/p}_{p,1}}+\|\dv^n(t)\|_{\dot B^{-1+1/p}_{p,1}}+\int_0^t\|\dv^n\|_{\dot B^{1+1/p}_{p,1}}\leq 
 C\biggl(\int_0^t\|\da^n\|_{\dot B^{1/p}_{p,1}}\\+
   \int_0^t\bigl(\|a^{n-1}\|_{\dot B^{1/p}_{p,1}}+\|a^n\|_{\dot B^{1/p}_{p,1}} +\|v^{n-1}_y\|_{\dot B^{1/p}_{p,1}}\bigr)\|\da^{n-1}\|_{\dot B^{1/p}_{p,1}}+\int_0^t\|a^{n}\|_{\dot B^{1/p}_{p,1}}\|\dv_y^{n-1}\|_{\dot B^{1/p}_{p,1}}\biggr)\cdotp\end{multline*}
Summing up on $n\in\N,$ applying Gronwall lemma then using \eqref{eq:unifbound}, one can conclude that
 $$\sum_{n\in\N} \underset{t\in[0,T]}\sup\Bigl(\|\da^n(t)\|_{\dot B^{1/p}_{p,1}}+\|\dv^n(t)\|_{\dot B^{-1+1/p}_{p,1}}\Bigr)+
 \sum_{n\in\N}\int_0^T\|\dv^n\|_{\dot B^{1+1/p}_{p,1}}<\infty,\qquad T>0.$$
 This implies that $(a^n,v^n)_{n\in\N}$ is a Cauchy sequence in the space $F_T,$ 
 and thus converges to some limit $(a,v)\in F_T,$ for all $T>0.$
 
  \subsubsection*{Step 4. Passing to the limit in the equations}
 The strong convergence obtained in the previous step allows to pass to the limit in \eqref{eq:NSClaglinn}, and 
 $(a,v)$ is thus a solution of \eqref{eq:NSClag}, with initial data $(a_0,v_0).$ 
 The uniform bound \eqref{eq:unifbound} ensures in addition that $(a,v)$ belongs to the 
  space described in Theorem \ref{thm:main1}, up to the fact that classical functional analysis does not
  ensure time continuity and the $L^1$-in-time integrability. 
  To recover these latter properties, one can argue like in e.g. \cite[Chap. 10]{BCD}. 
  This completes the proof. \qed


\section{Time decay estimates} \label{s:decay}
 
The goal is to prove Theorem \ref{thm:main2}.
Under the hypothesis \eqref{eq:reduction}, we consider a global solution $(\eta=1+a,v)$ given by Theorem \ref{thm:main1}. We want to establish 
 that if Condition \eqref{eq:smalldata} holds true (with, possibly, a smaller constant $c$), 
then $(a,v)$ satisfies the decay inequality \eqref{eq:decay}. 

 \subsection{Decay of the low frequencies}
 
To estimate the term $\cD^\ell$ defined in  \eqref{eq:decay}, 
 we start from \eqref{eq:lffinal} which allows to get 
 $$\frac 12\frac d{dt}\bigl(\langle t\rangle\cL_j\bigr)^2+c_B2^{2j} \bigl(\langle t\rangle\cL_j\bigr)^2\leq C_B\bigl(\langle t\rangle\cL_j\bigr)
 (\langle t\rangle\|g_j\|_{L^2}) +t\cL_j^2$$
and thus after multiplying both sides by $2^{3j},$
 $$\frac 12\frac d{dt}\bigl(\langle t\rangle2^{\frac{3j}2}\cL_j\bigr)^2
 +c_B \bigl(\langle t\rangle2^{\frac{5j}2}\cL_j\bigr)^2\leq C_B\bigl(\langle t\rangle2^{\frac{5j}2}\cL_j\bigr)
 (\langle t\rangle2^{\frac j2}\|g_j\|_{L^2}) +(t2^{\frac{5j}2}\cL_j)(2^{\frac{j}2}\cL_j).$$
Then,  we use Young inequality to absorb $\langle t\rangle2^{\frac{5j}2}\cL_j$ and $t2^{\frac{5j}2}\cL_j$
by the left-hand side, and integrate with respect to time, getting for all $t\in\R_+,$ 
$$\sup_{\tau\in[0,t]}\bigl(\langle \tau\rangle2^{\frac{3j}2}\cL_j(\tau)\bigr)^2+c_B\int_0^t \bigl(\langle \tau\rangle2^{\frac{5j}2}\cL_j\bigr)^2
\lesssim \bigl(2^{\frac{3j}2}\cL_j(0)\bigr)^2
+\int_0^t \bigl(\langle \tau\rangle2^{\frac{j}2}\|g_j\|_{L^2})^2
+\int_0^t \bigl(2^{\frac{j}2}\cL_j\bigr)^2\cdotp$$
Summing up on $j\leq j_0$ and remembering \eqref{eq:equiv} and the definition of $\cD^\ell,$ we thus get
$$\cD^\ell(t)\lesssim \|(a_0,v_0)\|^\ell_{\dot B^{3/2}_{2,1}}
 +\|\langle \tau\rangle g\|_{\wt L_t^2(\dot B^{1/2}_{2,1})}^\ell +\|(a,v)\|_{\wt L_t^2(\dot B^{1/2}_{2,1})}^\ell.$$
Bounding the last term according to \eqref{eq:interpo1} and \eqref{eq:global}, we conclude that
 \begin{equation}\label{eq:decaylf}
\cD^\ell(t)\lesssim \cX_{2,0} 
  +\|\langle \tau\rangle g\|_{\wt L_t^2(\dot B^{1/2}_{2,1})}^\ell\quad\hbox{for all }\ t\in\R_+.\end{equation}
 To bound $g$, we shall use the decomposition:
 \begin{equation}\label{eq:decompo}
  a^2=(a^\ell)^2+2a^\ell a^h+(a^h)^2\andf av_y= a^\ell v_y^\ell+ a^h v_y^\ell +a v_y^h.\end{equation}
 In a first time, we assume that $K(a)=L(a)=a,$ so that  $g=(a^2)_y+(av_y)_y.$
 Then,   using  \eqref{eq:lfhf}, leveraging Bony's decomposition 
 and results of continuity for the paraproduct and remainder operators
 (use \cite[Thm 2.47 and 2.52]{BCD})  yields 
  $$\begin{aligned}\|\langle \tau\rangle\partial_y((a^\ell)^2)\|_{\wt L_t^2(\dot B^{1/2}_{2,1})}^\ell
   &\lesssim  \|\langle \tau\rangle(a^\ell)^2\|_{\wt L_t^2(\dot B^{3/2}_{2,1})}
 \lesssim  \|a\|^\ell_{\wt L_t^\infty(\dot B^{-1/2}_{2,1})}\|\langle \tau\rangle a\|_{\wt L_t^2(\dot B^{5/2}_{2,1})}^\ell,\\
 \|\langle \tau\rangle\partial_y(a^\ell a^h)\|_{\wt L_t^2(\dot B^{1/2}_{2,1})}^\ell   &\lesssim 
  \|\langle \tau\rangle a^\ell a^h\|_{\wt L_t^2(\dot B^{1/2}_{2,1})}\lesssim  \|\langle \tau\rangle a\|^h_{\wt L_t^\infty(\dot B^{1/2}_{2,1})}
  \|a\|_{\wt L_t^2(\dot B^{1/2}_{2,1})}^\ell,\\
 \|\langle \tau\rangle\partial_y((a^h)^2)\|_{\wt L_t^2(\dot B^{1/2}_{2,1})}^\ell
   &\lesssim  \|\langle \tau\rangle(a^h)^2\|_{\wt L_t^2(\dot B^{1/2}_{2,1})}\lesssim  \|\langle \tau\rangle a\|^h_{\wt L_t^\infty(\dot B^{1/2}_{2,1})}
  \|a\|_{\wt L_t^2(\dot B^{1/2}_{2,1})}^h.
   \end{aligned}$$
Hence
 \begin{equation}\label{eq:decaylf1} \|\langle \tau\rangle \partial_y(a^2)\|_{\wt L_t^2(\dot B^{1/2}_{2,1})}^\ell
 \lesssim \cX_{2}(t)\bigl(\cD^\ell(t)+\cD^h_a(t)\bigr)\cdotp
\end{equation}
In the same spirit, we have 
 $$\begin{aligned}\|\langle \tau\rangle \partial_y(  a^\ell v_y^\ell  )\|_{\wt L^2_t(\dot B^{1/2}_{2,1})}^\ell
 &\lesssim  \|\langle \tau\rangle (  a^\ell v_y^\ell  )\|_{\wt L^2_t(\dot B^{1/2}_{2,1})}^\ell\\
&\lesssim \|a\|^\ell_{\wt L_t^\infty(\dot B^{-1/2}_{2,1})}\|\langle\tau\rangle v_y\|^\ell_{\wt L_t^2(\dot B^{3/2}_{2,1})}
 + \|\langle \tau\rangle a\|^\ell_{\wt L_t^\infty(\dot B^{3/2}_{2,1})}\|v_y\|^\ell_{\wt L_t^2(\dot B^{-1/2}_{2,1})},\\
 \|\langle \tau\rangle \partial_y(   a^h v_y^\ell  )\|_{\wt L^2_t(\dot B^{1/2}_{2,1})}^\ell
 &\lesssim \|\langle\tau\rangle a^h v_y^\ell\|_{\wt L^2_t(\dot B^{1/2}_{2,1})}^\ell\lesssim \|a\|^h_{\wt L^2_t(\dot B^{1/2}_{2,1})}
 \|\langle \tau\rangle v_y\|_{\wt L_t^\infty(\dot B^{1/2}_{2,1})}^\ell,\\
\|\tau\partial_y( av_y^h )\|_{\wt L^2_t(\dot B^{1/2}_{2,1})}^\ell
 &\lesssim   \|\tau ( av_y^h )\|_{\wt L^2_t(\dot B^{1/2}_{2,1})}^\ell\lesssim \|a\|_{\wt L^2_t(\dot B^{1/2}_{2,1})}
 \|\tau v\|_{\wt L_t^\infty(\dot B^{3/2}_{2,1})}^h.\end{aligned}$$
 Combining the product law $\dot B^{1/2}_{2,1}\times \dot B^{-1/2}_{2,1}\to \dot B^{-1/2}_{2,\infty}$ and
the fact that for  $s<s',$ we have
 \begin{equation}
 \|z\|^\ell_{\dot B^{s'}_{2,1}}\lesssim  \|z\|^\ell_{\dot B^{s}_{2,\infty}},\end{equation}
 we may write 
 $$
 \|\d_y(av_y^h)\|_{\wt L_t^2(\dot B^{1/2}_{2,1})}^\ell \lesssim 
  \|av_y^h\|_{\wt L_t^2(\dot B^{-1/2}_{2,\infty})}\lesssim  \|a\|_{\wt L_t^\infty(\dot B^{1/2}_{2,1})}
   \|v_y\|_{\wt L_t^2(\dot B^{-1/2}_{2,1})}. $$
 Hence
\begin{equation}\label{eq:decaylf2}\|\langle \tau \rangle (a v_y)_y\|_{\wt L_t^2(\dot B^{1/2}_{2,1})}^\ell
\lesssim \cX_2(t) \bigl(\cD^\ell(t)+\cD^h_a(t)+\wt\cD^h_v(t)+\cX_2(t)\bigr)\cdotp\end{equation}
Reverting to \eqref{eq:decaylf} and remembering \eqref{eq:global} and that $\cX_{2,0}$ is small, we end up with 
\begin{equation}\label{eq:Dell}\cD^\ell(t)\lesssim \cX_{2,0} (1+ \cD^\ell(t)+\cD_a^h(t)+\wt\cD^h_v(t)\bigr)\cdotp\end{equation} 
For  general viscosity and pressure functions, we have to handle the additional 
terms $(a^2\wt K(a))_y$ and $(a\wt L(a)v_y)_y$ with $\wt K(0)=\wt L(0)=0.$
To bound $(a^2\wt K(a))_y,$ it suffices to write that
\begin{align*}
\|\langle \tau\rangle\partial_y(a^2\wt K(a))\|_{\wt L_t^2(\dot B^{1/2}_{2,1})}^\ell
&\lesssim \|\langle \tau\rangle a^2\wt K(a)\|_{\wt L_t^2(\dot B^{1/2}_{2,1})}\\
&\lesssim  \|\langle \tau\rangle^{1/2} a\|_{\wt L_t^\infty(\dot B^{1/2}_{2,1})}^2
 \|a\|_{\wt L_t^2(\dot B^{1/2}_{2,1})}.
 \end{align*}
Since
\begin{align}\label{eq:keyinterpo}
\|\langle t\rangle^{1/2}a\|_{\wt L_t^\infty(\dot B^{1/2}_{2,1})}&\leq \|\langle t\rangle^{1/2}a\|_{\wt L_t^\infty(\dot B^{1/2}_{2,1})}^\ell
+\|\langle t\rangle^{1/2}a\|_{\wt L_t^\infty(\dot B^{1/2}_{2,1})}^h\nonumber\\
&\lesssim \sqrt{ \|a\|_{\wt L_t^\infty(\dot B^{-1/2}_{2,1})}^\ell\|\langle t\rangle a\|_{\wt L_t^\infty(\dot B^{3/2}_{2,1})}^\ell}
+ \sqrt{ \|a\|^h_{\wt L_t^\infty(\dot B^{1/2}_{2,1})}\|\langle t\rangle a\|_{\wt L_t^\infty(\dot B^{1/2}_{2,1})}^h}\nonumber\\
&\lesssim \sqrt{\cX_2(t)(\cD^\ell(t) \!+\! \cD^h_a(t))}, \end{align}
we get
$$\|\langle \tau\rangle\partial_y(a^2\wt K(a))\|_{\wt L_t^2(\dot B^{1/2}_{2,1})}^\ell
\lesssim  (\cX_2(t))^2\bigl(\cD^\ell(t)+\cD_a^h(t)\bigr)\cdotp$$
To bound  $(a\wt L(a)v_y)_y,$ 
the general  principle is to  first remove  the space derivative, then
to decompose  $av_y$ according to \eqref{eq:decompo}, and to argue as before.
For example:
 $$\begin{aligned}\|\langle \tau\rangle\partial_y(a^\ell v_y^\ell\wt L(a))\|_{\wt L_t^2(\dot B^{1/2}_{2,1})}^\ell
 &\lesssim \|\langle \tau\rangle a^\ell v_y^\ell\wt L(a)\|_{\wt L_t^2(\dot B^{1/2}_{2,1})}\\
  &\lesssim    \|\langle \tau\rangle (  a^\ell v_y^\ell  )\|_{\wt L^2_t(\dot B^{1/2}_{2,1})}
   \|a\|_{\wt L^\infty_t(\dot B^{1/2}_{2,1})}   \\
   \lesssim  \bigl(\|a\|&^\ell_{\wt L_t^\infty(\dot B^{-1/2}_{2,1})}\|\langle\tau\rangle v_y\|^\ell_{\wt L_t^2(\dot B^{3/2}_{2,1})}
\!+\! \|\langle \tau\rangle a\|^\ell_{\wt L_t^\infty(\dot B^{3/2}_{2,1})}\|v_y\|^\ell_{\wt L_t^2(\dot B^{-1/2}_{2,1})}\bigr)
   \|a\|_{\wt L^\infty_t(\dot B^{1/2}_{2,1})}.\end{aligned}$$
 The  terms corresponding to $a^hv_y^\ell$ and $a v_y^h$ can  treated in the same way. 
Since $\|a\|_{\wt L^\infty_t(\dot B^{1/2}_{2,1})}$ is small, one still gets \eqref{eq:decaylf1},  \eqref{eq:decaylf2} and thus  \eqref{eq:Dell}.

 \subsection{Decay of the high frequencies}

In order to bound $\cD^h_a$ and $\cD^h_v,$ we start from the observation that for all $\sigma\in[1,3/2],$ the pair
$(a,w_y)$ (with $w$ defined in \eqref{eq:hf1}) satisfies 
\begin{equation}\label{eq:decayS}\left\{\begin{array}{l}
(t^{\sigma}a)_t+t^{\sigma}a=\sigma t^{\sigma-1}a +t^{\sigma} w_y,\\[1ex]
(t^{\sigma}w_y)_t-(t^{\sigma}w_y)_{yy} =t^{\sigma}w_y+\sigma t^{\sigma-1}w_y +t^{\sigma}g_y -t^{\sigma}a.\end{array}\right.\end{equation}
From the equation of $a,$ it is obvious that for all $t\in\R_+,$
$$\|\tau^{\sigma}a\|_{\wt L_t^\infty(\dot B^{1/2}_{2,1})}^h
\leq \sigma\|\tau^{\sigma-1}a\|_{\wt L_t^\infty(\dot B^{1/2}_{2,1})}^h
+\|\tau^{\sigma}w_y\|_{\wt L_t^\infty(\dot B^{1/2}_{2,1})}^h.$$
For $t\geq2,$ the first term of the right-hand side may be absorbed by the left-hand side, which results in
$$\|\tau^{\sigma}a\|_{\wt L_t^\infty(\dot B^{1/2}_{2,1})}^h
\leq 4\|\tau^{\sigma}w_y\|_{\wt L_t^\infty(\dot B^{1/2}_{2,1})}^h.$$
Since we already know from Theorem \ref{thm:main1} that
$$\|a\|_{\wt L_2^\infty(\dot B^{1/2}_{2,1})}^h\lesssim \cX_{2,0},$$
one can conclude that, for all $t\in\R_+,$
\begin{equation}\label{eq:decayahf}
\|\langle\tau\rangle^{\sigma}a\|_{\wt L_t^\infty(\dot B^{1/2}_{2,1})}^h
\lesssim \cX_{2,0} + \|\tau^{\sigma}w_y\|_{\wt L_t^\infty(\dot B^{1/2}_{2,1})}^h.\end{equation}
From the second equation of \eqref{eq:decayS} and maximal regularity estimates, we get
\begin{multline*}
\|\tau^{\sigma} w_y\|_{\wt L_t^\infty(\dot B^{1/2}_{2,1})}^h\leq
\|\tau^{\sigma}w_y\|_{\wt L_t^\infty(\dot B^{-3/2}_{2,1})}^h+\sigma \|\tau^{\sigma-1}w_y\|_{\wt L_t^\infty(\dot B^{-3/2}_{2,1})}^h
 \\+\|\tau^{\sigma}g_y\|_{\wt L_t^\infty(\dot B^{-3/2}_{2,1})}^h +\|\tau^{\sigma}a\|_{\wt L_t^\infty(\dot B^{-3/2}_{2,1})}^h.\end{multline*}
For $t\geq2,$ owing to the high frequencies cut-off, the first two terms of the right-hand side may be absorbed by the left-hand side. 
For $t\leq2,$ they may be just bounded by $\cX_{2,0},$ according to Theorem \ref{thm:main1}. 
Hence, putting together with \eqref{eq:decayahf}, we end up with 
\begin{equation}\label{eq:decayhf}
\cD_a^h(t)+\wt\cD_w^h(t)\lesssim \cX_{2,0}+\|\tau^{\sigma}g\|_{\wt L_t^\infty(\dot B^{-1/2}_{2,1})}^h.\end{equation}
If $K(a)=L(a)=a,$ then the nonlinear term $g$ can be bounded from product laws, \eqref{eq:lfhf} in terms of
$\cX_2,$ $\cD^\ell,$ $\cD^h_a$ and $\wt\cD^h_v$ as follows:
$$\begin{aligned}\|t^{\sigma}\partial_y((a^\ell)^2)\|_{\wt L_t^\infty(\dot B^{-1/2}_{2,1})}^h&\lesssim 
\|t^{\sigma}(a^\ell)^2\|_{\wt L_t^\infty(\dot B^{1/2}_{2,1})}^h\\
&\lesssim \|t^{\sigma}(a^\ell)^2\|_{\wt L_t^\infty(\dot B^{3/2}_{2,1})}\\
&\lesssim \|t a\|_{\wt L_t^\infty(\dot B^{3/2}_{2,1})}^\ell \|t^{\sigma-1} a\|^\ell_{\wt L_t^\infty(\dot B^{1/2}_{2,1})},\end{aligned}
$$
$$\begin{aligned}\|t^{\sigma}(a a^h)_y\|_{\wt L_t^\infty(\dot B^{-1/2}_{2,1})}^h&\lesssim 
\|t^{\sigma} a a^h\|_{\wt L_t^\infty(\dot B^{1/2}_{2,1})}^h\\
&\lesssim \|a \|_{\wt L_t^\infty(\dot B^{1/2}_{2,1})}
\|t^{\sigma} a\|_{\wt L_t^\infty(\dot B^{1/2}_{2,1})}^h,\end{aligned}$$
$$\begin{aligned}
\|t^{\sigma}(a v_y^\ell)_y\|_{\wt L_t^\infty(\dot B^{-1/2}_{2,1})}^h&\lesssim 
\|t^{\sigma} a v_y^\ell\|_{\wt L_t^\infty(\dot B^{1/2}_{2,1})}^h\\
&\lesssim  \|t^{\sigma-1} a\|_{\wt L^\infty_t(\dot B^{1/2}_{2,1})}\|t v\|^\ell_{\wt L_t^\infty(\dot B^{3/2}_{2,1})},\end{aligned}$$
$$\begin{aligned}
\|t^{\sigma}\partial_y(a v_y^h)\|_{\wt L_t^\infty(\dot B^{-1/2}_{2,1})}^h&\lesssim 
\|t^{\sigma} a  v_y^h\|_{\wt L_t^\infty(\dot B^{1/2}_{2,1})}^h\\
&\lesssim \|a\|_{\wt L_t^\infty(\dot B^{1/2}_{2,1})} \|t^{\sigma} v_y\|_{\wt L_t^\infty(\dot B^{1/2}_{2,1})}^h.
\end{aligned}$$
Bounding  the terms $t^{\sigma-1}a$  by means of \eqref{eq:keyinterpo} (remember that $0\leq\sigma-1\leq1/2$), 
and reverting to \eqref{eq:decayhf} gives 
\begin{equation}\label{eq:decay3}\cD_a^h(t)+\wt\cD_w^h(t)\lesssim \cX_{2,0}+\sqrt{\cX_2(t)(\cD^\ell(t)\! + \!\cD^h_a(t))}\,\cD^\ell(t)
+\cX_2(t)\bigl(\cD_a^h(t)+\wt\cD_v^h(t)\bigr)\cdotp
 \end{equation}
For general pressure and viscosity functions, one has to handle in addition the terms
$(a^2\wt K(a))_y$ and $(a\wt L(a)v_y)_y$ with $\wt K(a)=\wt L(a)=0.$ 
They do not represent any difficulty as we have
$$\begin{aligned}
\|t^{\sigma}(a^2\wt K(a))_y\|_{\wt L^\infty_t(\dot B^{-1/2}_{2,1})}& \lesssim
\|t^{\sigma}a^2\wt K(a)\|_{\wt L^\infty_t(\dot B^{1/2}_{2,1})} \\
&\lesssim \|t^{\sigma}a^2\|_{\wt L^\infty_t(\dot B^{1/2}_{2,1})} \|a\|_{\wt L^\infty_t(\dot B^{1/2}_{2,1})}\\
&\lesssim   \|t^{\sigma}a^2\|_{\wt L^\infty_t(\dot B^{1/2}_{2,1})} \cX_2(t),\end{aligned}$$
$$\begin{aligned}
\|t^{\sigma}(a\wt L(a)v_y)_y\|_{\wt L^\infty_t(\dot B^{-1/2}_{2,1})}& \lesssim
\|t^{\sigma}(a v_y)\,\wt L(a)\|_{\wt L^\infty_t(\dot B^{1/2}_{2,1})} \\
&\lesssim \|t^{\sigma}av_y\|_{\wt L^\infty_t(\dot B^{1/2}_{2,1})} \|a\|_{\wt L^\infty_t(\dot B^{1/2}_{2,1})}\\
&\lesssim   \|t^{\sigma}a v_y\|_{\wt L^\infty_t(\dot B^{1/2}_{2,1})} \cX_2(t).\end{aligned}$$
The terms containing $a^2$ and $av_y$ may be bounded exactly as above. Since $\cX_2$ is small,  one still 
gets \eqref{eq:decay3}. 
To conclude the proof of Inequality \eqref{eq:decay}, it suffices to observe that owing to  $w_y-v_y=a,$  we have
$$\bigl\|\tau^{\sigma}(w_y-v_y)\|_{\wt L_t^\infty(\dot B^{1/2}_{2,1})} = \bigl\|\tau^{\sigma}a\bigr\|_{\wt L_t^\infty(\dot B^{1/2}_{2,1})}
\leq \cD_a^h(t).$$
Consequently, $\wt\cD_w^h$ can be replaced by $\wt\cD_v^h$ in \eqref{eq:decay3}. Then, putting together 
 with \eqref{eq:Dell} and using the fact that
$\cX_{2}$ remains small for all time, we get the desired statement. \qed


\section{The diffusive limit}\label{s:diffusive}

This section is dedicated to proving Theorem \ref{thm:main3}.

The first ingredient is the estimate provided by Theorem \ref{thm:main1}: we observe that
if $(\check \eta,\check v)$  is a solution of \eqref{eq:NL3}, then it also 
fulfills equations \eqref{eq:NL2} with parameters ${\rm Ma}\,\bar\nu^{-1}$ and $\bar\nu^2$
(instead of ${\rm Ma}$ and $\bar\nu$) and data $(\bar\eta+a_0,\bar\nu v_0).$  Hence, 
Condition \eqref{eq:smalldata} becomes 
\begin{equation}\label{eq:smalldata2}
 {\rm Ma}^{-1}\|a_0\|_{\dot B^{-1/2}_{2,1}} +\bar\nu \|a_0\|_{\dot B^{1/2}_{2,1}} 
 +  \|v_0\|_{\dot B^{-1/2}_{2,1}}\leq c\bar\eta\bar\nu,\end{equation}
 and Theorem \ref{thm:main1} guarantees that   $(\check \eta,\check v)$ is indeed global and satisfies Inequality \eqref{eq:finalbis}. 
\smallbreak
To pass to the limit in \eqref{eq:NL3}, it  looks that
we need a  better control on the low frequencies of the solutions. 
This motivates us to require that, in addition, $(a_0,v_0)$ belongs to 
$\dot B^{-\sigma}_{2,1}$ for some $\sigma\in(1/2,3/2).$ 
Note that, by interpolation, owing to $a_0\in\dot B^{1/2}_{2,1},$ this implies that $a_0\in\dot B^{1-\sigma}_{2,1}.$ 
\smallbreak
Let us introduce the following notation (where the dependency on ${\rm Ma}$ is omitted):
$$ \cI_{0,\bar\nu}^{s}:=\check\nu^{-1}\|a_0\|_{\dot B^{s}_{2,1}} +  \|a_0\|_{\dot B^{1+s}_{2,1}} 
 +  \bar\nu^{-1}\|v_0\|_{\dot B^{s}_{2,1}}.$$
We claim that provided  \eqref{eq:smalldata2} holds true, then  we have for all $t\in\R_+,$   
  \begin{multline}\label{eq:sigma}
 \check\nu^{-1}\|\check a(t)\|_{\dot B^{-\sigma}_{2,1}}+\|\check a(t)\|_{\dot B^{1-\sigma}_{2,1}}
 +  \bar\nu^{-2}\|\check v(t)\|_{\dot B^{-\sigma}_{2,1}}\\
+ {\rm Ma}^{-2}\int_0^t \bigl(\check\nu\|\check a\|^{\ell,\check\nu^{-1}}_{\dot B^{2-\sigma}_{2,1}} +\|\check a\|^{h,\check\nu^{-1}}_{\dot B^{1-\sigma}_{2,1}}
\bigr)
+\int_0^t \|\check v_{y}\|_{\dot B^{1-\sigma}_{2,1}}
\lesssim \cI_{0,\bar\nu}^{-\sigma}.\end{multline}
 Performing a suitable time and space rescaling reduces the proof of \eqref{eq:sigma} to the case ${\rm Ma}=\bar\nu=\bar\eta=1.$ 
 Now, taking advantage of \eqref{eq:linear} with $p=2,$ $s=s'=-\sigma$ and of the definition of $g$ in \eqref{eq:NSClaglin},
  we get
  \begin{multline}\label{eq:finalsigmab}\|a(t)\|_{\dot B^{-\sigma}_{2,1}} 
+ \|a(t)\|_{\dot B^{1-\sigma}_{2,1}}  +  \|v(t)\|_{\dot B^{-\sigma}_{2,1}}  
+\int_0^t \bigl(\|a\|^{\ell,1}_{\dot B^{2-\sigma}_{2,1}} + \|a\|^{h,1}_{\dot B^{1-\sigma}_{2,1}}
+\|v\|_{\dot B^{2-\sigma}_{2,1}}\bigr)\\
\lesssim \|a_0\|_{\dot B^{-\sigma}_{2,1}}
+\|a_0\|_{\dot B^{1-\sigma}_{2,1}} +  \|v_0\|_{\dot B^{-\sigma}_{2,1}}
+\int_0^t \bigl(\|a K(a)\|_{\dot B^{1-\sigma}_{2,1}}+\|L(a)v_y\|_{\dot B^{1-\sigma}_{2,1}}\bigr)\cdotp\end{multline}
Using the stability of the Besov space $\dot B^{1/2}_{2,1}$ by left composition and the 
product law $\dot B^{1/2}_{2,1}\times \dot B^{1-\sigma}_{2,1}\to  \dot B^{1-\sigma}_{2,1}$
that holds true if (and only if)  $-1/2<1-\sigma\leq 1/2,$ we readily get
 $$\begin{aligned} \int_0^t  \bigl(\|a K(a)\|_{\dot B^{1-\sigma}_{2,1}}+\|L(a)v_y\|_{\dot B^{1-\sigma}_{2,1}}\bigr)&\lesssim
 \int_0^t\|a\|_{\dot B^{1/2}_{2,1}}\bigl(\|a\|_{\dot B^{1-\sigma}_{2,1}} +\|v_y\|_{\dot B^{1-\sigma}_{2,1}} \bigr)\\ 
 &\lesssim \|a\|_{L^2_t(\dot B^{1/2}_{2,1})} \|a\|_{L^2_t(\dot B^{1-\sigma}_{2,1})}
 + \|a\|_{L^\infty_t(\dot B^{1/2}_{2,1})} \|v\|_{L^1_t(\dot B^{2-\sigma}_{2,1})}.\end{aligned}
 $$
 Remembering \eqref{eq:final} and the smallness hypothesis \eqref{eq:smalldata}, we discover that the nonlinear terms in 
 \eqref{eq:finalsigmab} may be absorbed by the left-hand side, yielding eventually \eqref{eq:sigma} 
 in the case ${\rm Ma}=\bar\nu=\bar\eta=1,$ and thus in full generality, after reverting to the original variables.  
 \medbreak
 Let us set 
 $$ \wt Q(z):={\rm Ma}^2 Q(z) \andf \wt \nu(z):=\bar\nu^{-1}\nu(z).$$
 To pass to the limit in \eqref{eq:NL3}, the key is to rewrite the equation of $\check a:=\check \eta-\bar\eta$ as
$$\check a_t-{\rm Ma}^{-2} (\wt\nu^{-1}\wt Q)(\bar\eta+\check a) =\check w_y 
\with \check w_y:=\check v_y-{\rm Ma}^{-2}(\wt\nu^{-1}\wt Q)(\check \eta),$$
then to prove that $\check w_y$ converges strongly to zero in a suitable space. 
Before that, let us focus on the limit equation \eqref{eq:limit1}. Since the function $\wt\nu^{-1}\wt Q$ is smooth, this equation
can  be solved locally in time
by means of the Cauchy-Lipschitz theorem. Furthermore, since $(\wt\nu^{-1}\wt Q)'(\bar\eta)<0$ and $\wt Q(0)=0,$    solutions
$\check\theta$ emanating from small perturbations of $\bar\eta$ are  global. 
\medbreak
 Observe that   $\check b:=\check\theta-\bar\eta$ satisfies for some smooth function $k$ vanishing at $0,$ 
 $$\check b_t+{\rm Ma}^{-2}\check b= {\rm Ma}^{-2} \check b\, k(\check b)\andf \check b|_{t=0}=b_0:=\check\theta_0-\bar\eta.$$
Hence, 
routine computations give
$$\begin{aligned}\|\check b\|_{L^\infty_t(\dot B^{1/2}_{2,1})}+{\rm Ma}^{-2}\|\check b\|_{L^1_t(\dot B^{1/2}_{2,1})}& \leq 
\|b_0\|_{\dot B^{1/2}_{2,1}}+ C{\rm Ma}^{-2}\int_0^t \|\check b(t)\|_{\dot B^{1/2}_{2,1}}^2\\
 &\leq \|b_0\|_{\dot B^{1/2}_{2,1}}+ C\|\check b\|_{L^\infty_t(\dot B^{1/2}_{2,1})} \bigl({\rm Ma}^{-2}\|\check b\|_{L^1_t(\dot B^{1/2}_{2,1})}\bigr)\cdotp\end{aligned}
$$
Consequently, if we assume that $\|b_0\|_{\dot B^{1/2}_{2,1}}\ll1$ then, for all $t\geq0,$ we have 
\begin{equation}\label{eq:estimateb}
\|\check b\|_{L^\infty_t(\dot B^{1/2}_{2,1})}+{\rm Ma}^{-2}\|\check b\|_{L^1_t(\dot B^{1/2}_{2,1})} \leq 
2\|b_0\|_{\dot B^{1/2}_{2,1}}.\end{equation}
Now, subtracting the equation of $\check b$ from the one of $\check a$, we discover that
$\da:=\check a-\check b$ satisfies
\begin{equation}\label{eq:da0}\da_t+{\rm Ma}^{-2}\da = \check w_y +{\rm Ma}^{-2} \da\, k(\check b) +{\rm Ma}^{-2} \check a (k(\check a)-k(\check b)).\end{equation}
Let us admit for a while that one can decompose $\check w_y$ into
 \begin{equation}\label{eq:wy}
 \check w_y=A+B\with A\in L^1(\R_+;\dot B^{1/2}_{2,1})\andf B\in L^2(\R_+;\dot B^{1/2}_{2,1}).\end{equation}
 Then,  it is easy to get from \eqref{eq:da0} that
 \begin{multline}\label{eq:da1}\|\da\|_{L^\infty_t(\dot B^{1/2}_{2,1})}+{\rm Ma}^{-1}\|\da\|_{L^2_t(\dot B^{1/2}_{2,1})} \leq \|\da_0\|_{\dot B^{1/2}_{2,1}}
+\|A\|_{L^1_t(\dot B^{1/2}_{2,1})} + {\rm Ma}\|B\|_{L^2_t(\dot B^{1/2}_{2,1})}\\
+{\rm Ma}^{-2}\| \da\, k(\check b) +\check a (k(\check a)-k(\check b))\|_{L^1_t(\dot B^{1/2}_{2,1})}.\end{multline}
The second line is  harmless. Indeed, 
by \eqref{eq:compo} and product laws, we have
$$\begin{aligned}\|\da \, k(\check b)\|_{L_t^1(\dot B^{1/2}_{2,1})}&\lesssim \|\check b\|_{L^2_t(\dot B^{1/2}_{2,1})} \|\da\|_{L^2_t(\dot B^{1/2}_{2,1})},\\
\|\check a (k(\check a)-k(\check b))\|_{L^1_t(\dot B^{1/2}_{2,1})}&\lesssim 
 \|\check a\|_{L^2_t(\dot B^{1/2}_{2,1})} \|\da\|_{L^2_t(\dot B^{1/2}_{2,1})}.\end{aligned}$$
 As $\check a$ and $\check b$ are small  in $L^2(\R_+;\dot B^{1/2}_{2,1})$  
 (remember \eqref{eq:finalbis} and \eqref{eq:estimateb}), these terms may be absorbed by the left-hand side of \eqref{eq:da1}, yielding eventually
 \begin{equation}\label{est:da}
 \|\da\|_{L^\infty_t(\dot B^{1/2}_{2,1})}+{\rm Ma}^{-1}\|\da\|_{L^2_t(\dot B^{1/2}_{2,1})} \lesssim  \|\da_0\|_{\dot B^{1/2}_{2,1}}
+\|A\|_{L^1_t(\dot B^{1/2}_{2,1})} + {\rm Ma}\|B\|_{L^2_t(\dot B^{1/2}_{2,1})}.\end{equation}
 There only remains to justify \eqref{eq:wy} with $A$ and $B$ tending to $0$ in the desired spaces
 when $\bar\nu$ goes to $\infty.$
  To do so,  we set
$$
 A:=\check w_y^{h,\alpha} + \check v_y^{\ell,\alpha}\andf B:=-{\rm Ma}^{-2}\bigl((\wt\nu^{-1}\wt Q)(\check \eta)\bigr)^{\ell,\alpha}$$
where the parameter $\alpha$ is chosen so that $\check\nu^{-2}\ll \alpha\ll 1.$
\smallbreak
We  shall use repeatedly the obvious fact that 
\begin{equation}\label{eq:lfhfbis}
\|z\|_{\dot B^\sigma_{2,1}}^{\ell,\alpha}\lesssim \alpha^{\sigma-\sigma'}\|z\|_{\dot B^{\sigma'}_{2,1}}^{\ell,\alpha}\andf
\|z\|_{\dot B^{\sigma'}_{2,1}}^{h,\alpha}\lesssim \alpha^{\sigma'-\sigma}\|z\|_{\dot B^{\sigma}_{2,1}}^{h,\alpha},\qquad \sigma'\leq\sigma.
\end{equation}
In order to bound $B,$ it suffices to use that  $-(\wt \nu^{-1}\wt Q)(\check \eta)= \check a + f(\check a) \check a$ for some smooth function $f$
vanishing at $0.$ Hence,  due to product and composition laws, 
$$
\|(\wt \nu^{-1}\wt Q)(\check \eta)\|_{\dot B^{1-\sigma}_{2,1}}\lesssim 
\|\check a\|_{\dot B^{1-\sigma}_{2,1}}\bigl(1+ \|\check a\|_{\dot B^{1/2}_{2,1}}\bigr)\cdotp
$$
Therefore, using  \eqref{eq:finalsigmab} and  \eqref{eq:lfhfbis}, we arrive at
 \begin{equation}\label{eq:B}  {\rm Ma}\|B\|_{L^2_t(\dot B^{1/2}_{2,1})}\lesssim 
  \alpha^{\sigma-1/2}{\rm Ma}^{-1}\|(\wt \nu^{-1}\wt Q)(\check a)\|_{L^2_t(\dot B^{1-\sigma}_{2,1})}\lesssim  \alpha^{\sigma-1/2}\cI_{0,\bar\nu}^{-\sigma}.
  \end{equation}
Let us turn to the study of $A.$ 
The low frequency part is easy: from  \eqref{eq:finalsigmab} and \eqref{eq:lfhfbis}, we have 
\begin{equation}\label{eq:A1}
\| \check v_y\|^{\ell,\alpha}_{L^1_t(\dot B^{1/2}_{2,1})} \leq \alpha^{\sigma-1/2}\|\check v_y\|_{L^1_t(\dot B^{1-\sigma}_{2,1})}^{\ell,\alpha}
\lesssim \alpha^{\sigma-1/2} \cI_{0,\bar\nu}^{-\sigma}.\end{equation}
There only remains to  bound the high frequencies of $\check w_y.$ To do so, we observe that
 \begin{equation}\label{eq:NL4}
(\check w_y)_t - \bar\nu^{2}(\wt\nu \check w_y)_{yy}= -{\rm Ma}^{-2}(\wt\nu^{-1}\wt Q)'(\check \eta)\check v_y,
 \end{equation}
which can be rewritten
 $$(\check w_y)_t - \bar\nu^{2}(\check w_y)_{yy}= \bar\nu^2(k_1(\check a)\check w_y)_{yy}
 +  {\rm Ma}^{-2}(1+k_2(\check a))\check v_y,$$
where the functions $k_1,\, k_2$  are  smooth and vanish  at zero.
\medbreak
By parabolic maximal regularity (restricted to high frequencies of $w$), we readily have
 \begin{equation}\label{eq:da2}
\bar\nu^2 \|\check w_y\|^{h,\alpha}_{L_t^1(\dot B^{1/2}_{2,1})}\lesssim\|\check w_{y,0}\|_{\dot B^{-3/2}_{2,1}}^{h,\alpha}
 + \bar\nu^{2} \|k_1(\check a)\check w_y\|^{h,\alpha}_{L_t^1(\dot B^{1/2}_{2,1})}+ {\rm Ma}^{-2}\|(1+k_2(\check a))\check v_y\|^{h,\alpha}_{L_t^1(\dot B^{-3/2}_{2,1})}.\end{equation}
Due to \eqref{eq:lfhfbis}, we have 
  $$ \|\check w_{y,0}\|_{\dot B^{-3/2}_{2,1}}^{h,\alpha}\lesssim
  \bar\nu\|v_{0}\|_{\dot B^{-1/2}_{2,1}}^{h,\alpha}+{\rm Ma}^{-2}\alpha^{-2}\|\check a_0\|_{\dot B^{1/2}_{2,1}}.$$
Next, thanks to the usual product laws, 
$$\begin{aligned}
 \|k_1(\check a)\check w_y\|^{h,\alpha}_{L_t^1(\dot B^{1/2}_{2,1})}
& \lesssim  \|k_1(\check a)\check w_y^{h,\alpha}\|_{L_t^1(\dot B^{1/2}_{2,1})}
+ \|k_1(\check a)\check v_y^{\ell,\alpha}\|_{L_t^1(\dot B^{1/2}_{2,1})}+\|k_1(\check a)B\|_{L_t^1(\dot B^{1/2}_{2,1})}\\
&\lesssim \|\check a\|_{L^\infty_t(\dot B^{1/2}_{2,1})} \bigl(\|\check w_y\|^{h,\alpha}_{L_t^1(\dot B^{1/2}_{2,1})}+
 \|\check v_y^{\ell,\alpha}\|_{L_t^1(\dot B^{1/2}_{2,1})}\bigl)+ \|\check a\|_{L_t^2(\dot B^{1/2}_{2,1})}\|B\|_{L_t^2(\dot B^{1/2}_{2,1})}.
\end{aligned}
$$
Thanks to the smallness of $\check a$ in the critical regularity space, the first term may be absorbed by the left-hand side
of \eqref{eq:da2}.  As for the second and third terms, they may be bounded thanks to \eqref{eq:B} and \eqref{eq:A1}.
  Next,  we write 
$$\begin{aligned}\|(1+k_2(\check a))\check v_y\|_{L_t^1(\dot B^{-3/2}_{2,1})}^{h,\alpha}
&\lesssim \alpha^{-2} \|(1+k_2(\check a))\check v_y\|_{L_t^1(\dot B^{1/2}_{2,1})}\\
&\lesssim   \alpha^{-2}\bigl(1+\|\check a\|_{L^\infty_t(\dot B^{1/2}_{2,1})}\bigr)\|\check v_y\|_{L_t^1(\dot B^{1/2}_{2,1})}
\end{aligned}$$
and use \eqref{eq:finalbis}.  Back to \eqref{eq:da2}, we end up with 
  $$ \|\check w_y\|^{h,\alpha}_{L_t^1(\dot B^{1/2}_{2,1})}\lesssim 
  \bar\nu^{-1}\|v_0\|_{\dot B^{-1/2}_{2,1}}+\alpha^{-2}\check \nu^{-2} \cI_{0,\nu}^{-1/2} +\alpha^{\sigma-1/2} 
 \cI_{0,\bar\nu}^{-1/2}\cI_{0,\bar\nu}^{-\sigma}.$$
  Consequently, if we assume that $I_{0,\bar\nu}^{-\sigma}\ll\check\nu^{\sigma-1/2}$ and choose  $\alpha$ such that
  $$   \alpha^{-2}\check \nu^{-2}= \alpha^{\sigma-1/2} I_{0,\bar\nu}^{-\sigma},$$
  and use also \eqref{eq:B} and \eqref{eq:A1}, we conclude that $\check w_y= A+B$ with 
   $$
   \|A\|_{L_t^1(\dot B^{1/2}_{2,1})}+{\rm Ma}      \|B\|_{L_t^2(\dot B^{1/2}_{2,1})}  \lesssim 
   \check \nu^{-\varsigma}\bigl(\cI_{0,\bar\nu}^{-\sigma}\bigr)^{\frac4{3+2\sigma}} +\bar\nu^{-1} \|v_0\|_{\dot B^{-1/2}_{2,1}}
   \with \varsigma:=   \frac{2\sigma-1}{\sigma+3/2}\cdotp$$
Reverting to \eqref{est:da} yields
 \begin{equation}\label{eq:dafinal} \|\da\|_{L^\infty_t(\dot B^{1/2}_{2,1})}+{\rm Ma}^{-1}\|\da\|_{L^2_t(\dot B^{1/2}_{2,1})} \lesssim 
\|\da_0\|_{\dot B^{1/2}_{2,1}} +  \check \nu^{-\varsigma}\bigl(\bar \cI_{0,\bar\nu}^{-\sigma}\bigr)^{\frac4{3+2\sigma}} +\bar\nu^{-1} \|v_0\|_{\dot B^{-1/2}_{2,1}}.\end{equation}
In the case of initial data independent of $\nu,$ we have $\da_0=0,$ whence  the uniform convergence
of $\check\eta$ to $\check\theta$  (on $\R_+\times\R$),  with the rate $\bar\nu^{-\varsigma}.$ \qed

 \section{Appendix}
 
 Here we motivate the scaling that we used for 
 the diffusive limit, at the linear level, by
 computing explicit formulae in the Fourier space for the solution of the linearized compressible
 Navier-Stokes equations.
 In passing, we point out the so-called `overdamping phenomenon' that may be observed 
 if we keep the original scaling and let the viscosity tend to infinity. 
 \medbreak
 As dimension $1$ does not play any role here, we consider the general multi-dimensional linearized 
 compressible Navier-Stokes equations,   namely
 \begin{equation}\label{eq:linearnsc}
 \left\{\begin{array}{l}
 a_t+\alpha \,\div z=0,\\[1ex]
 z_t-\lambda \Delta z - (\lambda+\lambda')\nabla\div z+\beta\,\nabla a=0.\end{array}\right.
 \end{equation}
 We make the stability  hypothesis\footnote{The standard linearized equations correspond 
 to $\alpha>0$ and $\beta>0.$ However, in the mass Lagrangian coordinates system here considered, 
 these two coefficients are \emph{negative}.}:
 $$ {\rm sgn}\,\alpha={\rm sgn}\,\beta\not=0,\quad \lambda>0\andf\mu:=2\lambda+\lambda'>0.$$
The divergence free part ${\mathcal P}z$ of $z$ just satisfies the heat equation with diffusion $\lambda,$
 and  the coupling between $a$ and $u:=\Lambda^{-1}\div z$ (with $\Lambda^s:=(-\Delta)^{s/2}$)
 is governed by:
  \begin{equation}\label{eq:linearnscsimp}
 \left\{\begin{array}{l}
 a_t+\alpha \Lambda u=0,\\[1ex]
 u_t-\mu \Delta u - \beta\Lambda a=0.\end{array}\right.
 \end{equation}
 Performing the space and time rescaling $$
a(t,x)=\check a\Bigl(\frac{\alpha\beta}\mu t,\frac{\sqrt{\alpha\beta}}\mu  x\Bigr),\quad
u(t,x)=\sqrt{\frac\beta\alpha}\check u\Bigl(\frac{\alpha\beta}\mu t,\frac{\sqrt{\alpha\beta}}\mu x\Bigr)$$
we discover that 
 $(\check a,\check u)$ satisfies \eqref{eq:linearnscsimp} with  $\alpha=\beta=\mu=1.$ 
Then, taking advantage of the formula computed in e.g. \cite{CD}, and scaling back, we end up with:
\begin{itemize}
\item  For $\mu|\xi|>2\sqrt{\alpha\beta}$~: 
\begin{align*}
\wh a(t,\xi)&= e^{t\lambda^{-}(\xi)}\biggl(\frac12\Bigl(1\!+\!\frac1{R(\xi)}\Bigr)\wh a_0(\xi)
-\frac\alpha{\mu|\xi|R(\xi)}\wh u_0(\xi)\biggr)\\\hspace{2cm}&+
e^{t\lambda^{+}(\xi)}\biggl(\frac12\Bigl(1\!-\!\frac1{R(\xi)}\Bigr)\wh a_0(\xi)
+\frac\alpha{\mu|\xi|R(\xi)}\wh u_0(\xi)\biggr),\\[1ex]
\wh u(t,\xi)&= e^{t\lambda^{-}(\xi)}\biggl(\frac\beta{\mu|\xi|R(\xi)}\wh a_0(\xi)+
\frac12\Bigl(1\!-\!\frac1{R(\xi)}\Bigr)\wh u_0(\xi)\biggr)\\\hspace{2cm}&+
 e^{t\lambda^{+}(\xi)}\biggl(-\frac\beta{\mu|\xi|R(\xi)}\wh a_0(\xi)+
\frac12\Bigl(1\!+\!\frac1{R(\xi)}\Bigr)\wh u_0(\xi)\biggr)
\end{align*}
 with $R(\xi):=\Bigl(1-\frac{4\alpha\beta}{\mu^2|\xi|^2}\Bigr)^{1/2}$ and 
 $\lambda^\pm(\xi):=-\frac{\mu|\xi|^2}{2}(1\pm R(\xi))$. \smallbreak
\item  
 For $\mu|\xi|<2\sqrt{\alpha\beta}$~: 
\begin{align*}
\wh a(t,\xi)&= e^{t\lambda^{-}(\xi)}\biggl(\frac12\Bigl(1\!+\!\frac i{R(\xi)}\Bigr)\wh a_0(\xi)
-\frac{i\alpha}{\mu|\xi|R(\xi)}\wh u_0(\xi)\biggr)\\\hspace{2cm}&+
e^{t\lambda^{+}(\xi)}\biggl(\frac12\Bigl(1\!-\!\frac i{R(\xi)}\Bigr)\wh a_0(\xi)
+\frac{i\alpha}{\mu|\xi|R(\xi)}\wh u_0(\xi)\biggr),\\[1ex]
\wh u(t,\xi)&= e^{t\lambda^{-}(\xi)}\biggl(\frac{i\beta}{\mu|\xi|R(\xi)}\wh a_0(\xi)+
\frac12\Bigl(1\!-\!\frac i{R(\xi)}\Bigr)\wh u_0(\xi)\biggr)\\\hspace{2cm}&+
 e^{t\lambda^{+}(\xi)}\biggl(-\frac{i\beta}{\mu|\xi|R(\xi)}\wh a_0(\xi)+
\frac12\Bigl(1\!+\!\frac i{R(\xi)}\Bigr)\wh u_0(\xi)\biggr)
\end{align*}
 with $R(\xi):=\Bigl(\frac{4\alpha\beta}{\mu^2|\xi|^2}-1\Bigr)^{1/2}$ and 
 $\lambda^\pm(\xi):=-\frac{\mu|\xi|^2}{2}(1\pm i R(\xi))$. 
 \end{itemize}
 As the potential part $\cQ z$ of $z$ satisfies $\cF(\cQ z)(\xi)=-i\frac{\xi \wh u(\xi)}{|\xi|},$ we deduce that for $\mu|\xi|>2\sqrt{\alpha\beta},$ 
 \begin{align*}\wh a(t,\xi)&= e^{t\lambda^{-}(\xi)}\biggl(\frac12\Bigl(1\!+\!\frac1{R(\xi)}\Bigr)\wh a_0(\xi)
-\frac{i\alpha}{\mu|\xi|^2R(\xi)} 
\xi\cdot\wh z_0(\xi)\biggr)
\\\hspace{2cm}&+
e^{t\lambda^{+}(\xi)}\biggl(\frac12\Bigl(1\!-\!\frac1{R(\xi)}\Bigr)\wh a_0(\xi)
+\frac{i\alpha}{\mu|\xi|^2R(\xi)}\xi\cdot\wh z_0(\xi)\biggr),\\[1ex]
\wh{\cQ z}(t,\xi)&= e^{t\lambda^{-}(\xi)}\biggl(-\frac{i\beta}{\mu|\xi|^2R(\xi)}\xi\wh a_0(\xi)+
\frac12\bigl(1\!-\!\frac1{R(\xi)}\bigr)\wh{\cQ z_0}(\xi)\biggr)\\\hspace{2cm}&+
 e^{t\lambda^{+}(\xi)}\biggl(\frac{i\beta}{\mu|\xi|^2R(\xi)}\xi\wh a_0(\xi)+
\frac12\Bigl(1\!+\!\frac1{R(\xi)}\Bigr)\wh{\cQ z_0}(\xi)\biggr),
\end{align*}
and a similar formula for  $\mu|\xi|<2\sqrt{\alpha\beta}.$
\medbreak 
Under the original scaling of \eqref{eq:NL2}, for any fixed frequency $\xi\not=0$ and 
for $\bar\nu\to\infty,$ we have 
$$
\lambda^+(\xi)\simeq -\bar\nu\xi^2,\quad 
\lambda^-(\xi)\simeq -\bar\nu^{-1}{\rm Ma}^{-2} \andf \frac12\biggl(1-\frac1{R(\xi)}\biggr)\simeq\frac{1}{\check\nu^2\xi^2}\,,
$$
whence 
$$\begin{aligned}
\wh a(t,\xi)&\simeq e^{-\frac{t}{\bar\nu{\rm Ma}^2}}\biggl(\wh a_0(\xi)+\frac{i\wh v_0(\xi)}{\bar\nu\xi}\biggr)
\andf\\
\wh v(t,\xi)&\simeq e^{-t\bar\nu\xi^2}\wh v_0(\xi)+ e^{-\frac{t}{\bar\nu{\rm Ma}^2}}\biggl(\frac{i\wh a_0(\xi)}{\bar\nu{\rm Ma}^2\xi}
+\frac{\wh v_0(\xi)}{\check\nu^2\xi^2}\biggr)
\cdotp\end{aligned}$$
Hence, we  get the trivial asymptotics that $a\to a_0$ and $v\to 0$ when $\bar\nu$ goes to infinity.
\medbreak
In contrast, using the diffusive scaling of \eqref{eq:NL3}, that is,  taking $\alpha=-1,$  $\beta=-{\rm Ma}^{-2}\bar\nu^2$ and $\mu=\bar\nu^2$ in the above relations yields 
 for $|\xi|>2\check\nu^{-1}$:
 \begin{align*}
\wh a(t,\xi)&= e^{t\lambda^{-}(\xi)}\biggl(\frac12\Bigl(1\!+\!\frac1{R(\xi)}\Bigr)\wh a_0(\xi)
+\frac i{\bar\nu^2 \xi R(\xi)}  \wh v_0(\xi)\biggr)\\\hspace{2cm}&+
e^{t\lambda^{+}(\xi)}\biggl(\frac12\Bigl(1\!-\!\frac1{R(\xi)}\Bigr)\wh a_0(\xi)-\frac i{\bar\nu^2 \xi R(\xi)}\wh v_0(\xi)\biggr),\\[1ex]
\wh v(t,\xi)&= e^{t\lambda^{-}(\xi)}\biggl(\frac i{{\rm Ma}^2\xi R(\xi)} \wh a_0(\xi)+
\frac12\Bigl(1\!-\!\frac1{R(\xi)}\Bigr)\wh v_0(\xi)\biggr)\\\hspace{2cm}&+
 e^{t\lambda^{+}(\xi)}\biggl(- \frac i{{\rm Ma}^2 \xi R(\xi)}\wh a_0(\xi)+
\frac12\Bigl(1\!+\!\frac1{R(\xi)}\Bigr)\wh v_0(\xi)\biggr),
\end{align*} 
with $R(\xi):=\Bigl(1-\frac{4}{\check\nu^2\xi^2}\Bigr)^{1/2}$ and 
 $\lambda^\pm(\xi):=-\frac{\bar\nu^2\xi^2}{2}(1\pm R(\xi)).$
\medbreak
We deduce that  $w_y:=v_y+{\rm Ma}^{-2}a$ is given by the formula
\begin{equation*}
\wh{w_y}(t,\xi)= \frac12\biggl(\Bigl(1\!-\!\frac1{R(\xi)}\Bigr) e^{t\lambda^-(\xi)} 
+\Bigl(1\!+\!\frac1{R(\xi)}\Bigr) e^{t\lambda^+(\xi)} \biggr)\wh{w_{0,y}}(\xi)
+\biggl(\frac{e^{t\lambda^-(\xi)}-e^{t\lambda^+(\xi)}}{R(\xi)\check\nu^2\xi^2} \biggr)\wh{v_{0,y}}(\xi).
\end{equation*}
Consequently, for fixed $\xi\not=0$ and $\bar\nu$ going to zero, we have 
$$\wh a(t,\xi)=e^{-t} \wh a_0(\xi)\andf  
\wh w_y(t,\xi)\simeq \biggl(e^{-\bar\nu^2t|\xi|^2}-\frac{e^{-t}}{\check\nu^2|\xi|^2}\wh w_{0,y}\biggr)
+\frac{e^{-t}}{\check\nu^2|\xi|^2}\,\wh v_{0,y}(\xi).$$

Hence $a$ converges to the solution of the linearized equation \eqref{eq:limit1}, while
$\wh w$ goes to zero, with a rate that can be quantified in terms of norms and (negative) powers of $\bar\nu.$

{\medbreak\noindent{\bf Acknowledgments.}  
The  author is grateful to the anonymous referee, 
whose suggestions helped clarify certain passages in the demonstrations.}

\end{document}